\newtheorem{definition}{Definition}
\newtheorem{theorem}{Theorem}
\newtheorem{lemma}{Lemma}
\begin{document}

\title{\textbf{Empirical phi-divergence test statistics in the logistic regression
model}}

\author{}
\author{A. Felipe$^{1}$, P. Garc\'{\i}a-Segador$^{2}$, N. Mart\'{\i}n$^{3}$, P.
Miranda$^{1}$\thanks{Corresponding author: P. Miranda. pmiranda@mat.ucm.es. Tel: (+34) 91 394 44 19. Fax: (+34) 91 394 44 06}, and L. Pardo$^{1}$
\and $^{1}${\small Dept. of Statistics and O.R., Complutense University of Madrid, Madrid, Spain}
\and $^{2}${\small National Statistics Institute, Madrid, Spain}
\and $^{3}${\small Dept. of Financial and Actuarial Economics and Statistics, Complutense University of
Madrid}}

\date{}

\maketitle

\begin{abstract}
In this paper we apply divergence measures to empirical likelihood applied to logistic regression models. We define a family of empirical test statistics based on divergence measures, called empirical phi-divergence test statistics, extending the empirical likelihood ratio test. We study the asymptotic distribution of these empirical test statistics, showing that it is the same for all the test statistics in this family, and the same as the classical empirical likelihood ratio test. Next, we study the power function for the members in this family, showing that the empirical phi-divergence tests introduced in the paper are consistent in the Fraser sense. In order to compare the differences in behavior among the empirical phi-divergence test statistics in this new family, considered for the first time in this paper, we carry out a simulation study.

{\it Keywords: Empirical likelihood; Divergence measures; Empirical phi-divergence test statistics; Logistic regression model.}
\end{abstract}

\section{Introduction}

Empirical likelihood is a tool for dealing with data in nonparametric conditions but allowing the use of results derived from parametric inference. First introduced by Owen \cite{owe88, owe90, owe91, owe03}, empirical likelihood has been applied successfully to many different statistical problems, leading to results that can be applied in general situations. Roughly speaking, empirical likelihood offers techniques that can be applied to non-parametric problems while using tools designed for parametric cases.

One of these situations appears in the logistic regression model. As it will become apparent below, the empirical likelihood theory applied to the model of logistic regression is based on maximum likelihood; thus, the parameters of the model are obtained via this technique, and so is the corresponding likelihood ratio test for testing if the parameters take a concrete value. This test statistic is given by the difference between the likelihood based on the empirical maximum likelihood estimators (EMLE) and the likelihood in accordance with the null hypothesis. It can be shown (see below) that under mild conditions, EMLE is attained when the uniform distribution is considered. In this case, the empirical likelihood ratio test statistic (ELRT) can be seen as the Kullback-Leibler divergence measure between the uniform distribution and the best distribution (again in terms of likelihood) under the null hypothesis. On the other hand, the Kullback-Leibler divergence measure is just one of the members of a wide family of divergence measures known as phi-divergence measures. Thus, it makes sense to study what happens if other divergence measures of this family are considered instead. Indeed, phi-divergence test statistics have been considered previously in the context of empirical likelihood (see \cite{bamapa15, bamapa17, femimapa18} and references therein).

In this paper we introduce a new family of empirical test statistics and show that the asymptotic distribution for any member of this new family, named empirical phi-divergence test statistics, is the same and coincides with the asymptotic distribution of the ELRT. Therefore, they have the same behavior for large sample sizes and differences can only arise for small and moderate sample sizes. In order to compare these empirical phi-divergence test statistics, a simulation study is carried out. Next, we also study the power function of the empirical phi-divergence test statistics, showing that under any hypothesis in the alternative, the asymptotic distribution of the phi-divergence test statistics follows a normal distribution. In this case the parameters depend on the measure of phi-divergence considered. However, these differences tend to vanish when the sample size grows.

To fix the notation that we will use throughout the paper, let us develop the application of empirical likelihood to logistic regression (see \cite{roc01}). Consider a random sample of $n$ independent and identically distributed $(q+1)$-random vectors $(\boldsymbol{X}_{1},Y_{1}), ..., (\boldsymbol{X}_{n},Y_{n})$ from the random vector $(\bm X, Y).$ Component $Y$ is a dichotomous variable, and it is assumed that $P(Y=1)$ depends on the explanatory vector $\bm X =(X_1, ..., X_q)$. This means that for $Y_i, i=1, ..., n$, it follows

\[
\Pr\left(  Y_{i}=1\right)  =\pi_{i}\text{ and }\Pr\left(  Y_{i}=0\right)
=1-\pi_{i},\text{ }i=1,...,n,
\]
where $\pi_i$ can be explained in terms of $X_{1i}, ..., X_{qi}$ and some unknown parameters $\beta_1, ..., \beta_q$ such that $\beta_{i}\in\left(
-\infty,\infty \right) , i=1,...,q.$ Thus, the parametric space is $\Theta = \mathbb{R}^q.$ We will use the notation $\boldsymbol{x}_{i}^{T}:=\left(  x_{i1},...,x_{iq}\right) $ for the values of $\left( X_{1i}, ..., X_{qi}\right) $ in the sample and $\boldsymbol{\beta}^{T}:=\left(  \beta_{1},...,\beta_{q}\right) $.

It is assumed that $\pi_{i}=\pi (\boldsymbol{x}_{i}^{T}\boldsymbol{\beta}),\, i=1, ..., n$, i.e. it depends on $\boldsymbol{x}_{i}^{T}\boldsymbol{\beta}$. When dealing with the logistic regression model, this dependence is given by the logit function

\[ log \left( {\pi_i\over 1-\pi_i} \right) =: logit\left( \pi_{i}\right) =\sum_{j=1}^{q}\beta_{j}x_{ij}. \]

The explanatory design matrix is
\[ \mathds{X}=\left(  \boldsymbol{X}_{1},...,\boldsymbol{X}_{n}\right)  ^{T} \]
and we assume that $rank(\mathds{X})=q.$ We aim to obtain the value of parameter $\boldsymbol{\beta}.$ For a given sample

\[ \{ \left(  \boldsymbol{x}_{i}^{T},Y_{i}\right) , i=1,...,n\} \]
of $n$ independent observations, classical inference for the unknown parameter vector $\boldsymbol{\beta}$ is usually based on the maximum likelihood estimator (MLE). It is well-known that in general, and in particular for the logistic regression model, the MLE is a BAN (Best Asymptotically Normal) estimator.

Assuming the logistic regression model holds, we can apply the Lagrange multiplier method to conclude that the MLE $\hat{\boldsymbol{ \beta}}$ will be a solution of the following estimating equations,

\begin{equation}
\sum_{i=1}^{n} \boldsymbol{g}\left( \boldsymbol{x}_{i},Y_{i},\boldsymbol{\beta}\right) =\boldsymbol{0}_q, \label{0.1}
\end{equation}
where
\[ \boldsymbol{g}\left( \boldsymbol{x}_{i},Y_{i},\boldsymbol{\beta}\right) := \boldsymbol{x}_{i} (Y_{i}-\pi(\boldsymbol{x}_{i}^{T}\boldsymbol{\beta})), \quad i=1, ..., n. \]

Following \cite{qila94}, it can be seen that this system has a unique solution provided $\boldsymbol{0}$ is inside the convex hull of the points $\boldsymbol{g}\left( \boldsymbol{x}_{i},Y_{i},\boldsymbol{\beta}\right),\, i=1, ..., n.$


Note that it could happen that $\pi_i\approx 1$ and $Y_i=0$ or $\pi_i\approx 0$ and $Y_i=1$ and these data have a big impact on the estimations. In order to reduce the impact of these unusual data, some authors \cite{biyo96} have proposed a modification of function $\boldsymbol{g}$ introducing some weights. Thus, the new function would be defined by

\[ \boldsymbol{g}\left( \boldsymbol{x}_{i},Y_{i},\bm \beta\right) := w_i\boldsymbol{x}_{i} (Y_{i}-\pi(\boldsymbol{x}_{i}^{T}\bm \beta)), \quad i=1, ..., n, \]
where $w_i$ approaches 0 if $\pi_i\approx 1$ and $Y_i=0$ or $\pi_i\approx 0$ and $Y_i=1$. In this paper we consider $w_i=1, \, \forall i$, and so we do not measure the rationality of the data.

From now on, we assume the following condition:

\begin{itemize}
\item {\bf C1}. The true parameter $\bm \beta_0$ is the only solution of

\[ E\left[ \boldsymbol{g}\left( \boldsymbol{X}, Y,\bm \beta \right) \right] =\boldsymbol{0}_q. \]

\end{itemize}



Let $p_{1}(\bm \beta),...,p_{n}(\bm \beta )$ be a set of probability weights (depending on $\bm \beta $) allocated to the data. The EMLE (see \cite{owe88}), $\hat{\bm \beta}_{E, n}$ looks for the values of these weights so that the likelihood function under the logistic regression model is maximum; that translates into solving

\begin{equation}
\mathcal{L}_{n}\left(  \boldsymbol{\beta}\right)  =\max_{\bm \beta \in \mathbb{R}^q} \prod\limits_{i=1}^{n}p_{i}(\boldsymbol{\beta}) \label{0.2}
\end{equation}
subject to the constrains

\begin{equation}
\sum\limits_{i=1}^{n}p_{i}(\bm \beta )=1 \label{0.3}%
\end{equation}
and

\begin{equation}
\sum\limits_{i=1}^{n}p_{i}(\bm \beta )\boldsymbol{g}\left(  \boldsymbol{X}_{i},Y_{i},\bm \beta \right)  =\boldsymbol{0}_q. \label{0.4}%
\end{equation}

Remark that different values of $\bm \beta$ lead to different values for $\boldsymbol{g}\left(  \boldsymbol{X}_{i},Y_{i},\bm \beta \right)$, so this justifies that the probabilities $p_{1}(\bm \beta),...,p_{n}(\bm \beta )$ depend on $\bm \beta .$

Note also that we need the condition $p_i(\bm \beta )\geq 0, \forall i=1, ..., n$. As stated in \cite{roc01, roc03}, if we apply the Lagrange multipliers method, we obtain

%
%
$$ \sum_{i=1}^n \ln p_i(\boldsymbol{\beta }) + \lambda_0(\bm \beta ) (\sum_{i=1}^n p_i(\boldsymbol{\beta }) -1) - \boldsymbol{\lambda }^T(\bm \beta ) \sum_{i=1}^n  p_{i}(\bm \beta )\boldsymbol{g}\left(  \boldsymbol{X}_{i},Y_{i},\bm \beta \right).$$

Derivating with respect to $p_i(\bm \beta)$ and equalizing to 0 leads to

$$ {1\over p_i(\boldsymbol{\beta })} + \lambda_0(\bm \beta ) - \boldsymbol{\lambda }^T(\bm \beta )\boldsymbol{g}\left(  \boldsymbol{X}_{i},Y_{i},\bm \beta \right)=0 \Leftrightarrow 1 + \lambda_0(\bm \beta ) p_i(\boldsymbol{\beta })- p_i(\boldsymbol{\beta })\boldsymbol{\lambda }^T(\bm \beta )\boldsymbol{g}\left(  \boldsymbol{X}_{i},Y_{i},\bm \beta \right)=0.$$

Now, summing up in $i$ gives

$$ n + \lambda_0(\bm \beta ) \sum_{i=1}^n p_i(\boldsymbol{\beta })+ \boldsymbol{\lambda }^T(\bm \beta ) \sum_{i=1}^np_i(\boldsymbol{\beta })\boldsymbol{g}\left(  \boldsymbol{X}_{i},Y_{i},\bm \beta \right)=0,$$
and hence, as ${\displaystyle \sum_{i=1}^n p_i(\boldsymbol{\beta })=1}$ and ${\displaystyle \sum_{i=1}^np_i(\boldsymbol{\beta })\boldsymbol{g}\left(  \boldsymbol{X}_{i},Y_{i},\bm \beta \right)=0,}$ we obtain

$$ n + \lambda_0(\bm \beta ) =0 \Leftrightarrow \lambda_0(\bm \beta )=-n.$$

But then,

$$ 1 - n p_i(\boldsymbol{\beta })- p_i(\boldsymbol{\beta })\boldsymbol{\lambda }^T(\bm \beta )\boldsymbol{g}\left(  \boldsymbol{X}_{i},Y_{i},\bm \beta \right)=0 \Leftrightarrow p_i(\boldsymbol{\beta })= \frac{1}{n}\frac{1}{1+{1\over n}\boldsymbol{\lambda }^{T}\left( \bm \beta \right)  \boldsymbol{g}\left( \boldsymbol{X}_{i},Y_{i},\bm \beta \right) }.$$

Defining $\boldsymbol{t}(\bm \beta )= {1\over n}\boldsymbol{\lambda }(\bm \beta ),$ we obtain the solution.

\begin{equation}
p_{i}\left( \bm \beta \right) =\frac{1}{n}\cdot \frac{1}{1+\boldsymbol{t}^{T}\left( \bm \beta \right)  \boldsymbol{g}\left( \boldsymbol{X}_{i},Y_{i},\bm \beta \right) },\quad i=1,...,n, \label{0.4BIS}
\end{equation}
where $\boldsymbol{t}^{T}\left( \bm \beta \right) :=\left( t_{1}\left( \bm \beta \right) ,...,t_{q}\left( \bm \beta \right) \right) $ is a $q-$dimensional vector solution of the non-linear system of $q$ equations

\begin{equation}\label{5bis} \frac{1}{n}\sum\limits_{i=1}^{n}\frac{\boldsymbol{g}\left(  \boldsymbol{X}_{i},Y_{i},\bm \beta \right)  }{1+\boldsymbol{t}^{T}\left(
\bm \beta \right)  \boldsymbol{g}\left(  \boldsymbol{X}_{i},Y_{i},\bm \beta \right)  }=\boldsymbol{0}_q. \end{equation}


Besides, we also need $0\leq p_i \left( \bm \beta \right) \leq 1;$ hence, we need that

$$1+\boldsymbol{t}^{T}\left(
\bm \beta \right)  \boldsymbol{g}\left(  \boldsymbol{X}_{i},Y_{i},\bm \beta \right) \geq {1\over n},\, \forall i=1, ..., n.$$

We assume the following conditions:

\begin{itemize}
\item {\bf C2}: The null vector is contained in the convex hull of $\boldsymbol{g}\left(  \boldsymbol{X}_{i},Y_{i},\bm \beta \right) ,\, i=1, ..., n.$
\item {\bf C3}: The matrix ${\displaystyle \sum_{i=1}^n \boldsymbol{g}\left(  \boldsymbol{X}_{i},Y_{i},\bm \beta \right) \boldsymbol{g}\left(  \boldsymbol{X}_{i},Y_{i},\bm \beta \right)^T}$ is positive definite.
\end{itemize}

Then, we have the following result, whose proof can be seen in \cite{qila94}.

\begin{lemma}\label{lemma1}
Assuming {\bf C2} and {\bf C3}, there is a unique solution of the system defined by Eq. \eqref{5bis}, and the solution defined through Eq. \eqref{0.4BIS} is the only solution of the problem given by Eqs. \eqref{0.2}, \eqref{0.3} and \eqref{0.4}. Moreover, $\boldsymbol{t}^{T}\left(
\bm \beta \right) $ is continuous on $\bm \beta .$
\end{lemma}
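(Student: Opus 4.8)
The plan is to treat the three assertions separately, following the approach of Qin and Lawless. First I would fix $\boldsymbol{\beta}$ and consider the function $F(\boldsymbol{t}) = \frac{1}{n}\sum_{i=1}^n \frac{\boldsymbol{g}(\boldsymbol{X}_i,Y_i,\boldsymbol{\beta})}{1+\boldsymbol{t}^T\boldsymbol{g}(\boldsymbol{X}_i,Y_i,\boldsymbol{\beta})}$ on the open convex domain $\mathcal{D}_{\boldsymbol{\beta}} = \{\boldsymbol{t} : 1+\boldsymbol{t}^T\boldsymbol{g}(\boldsymbol{X}_i,Y_i,\boldsymbol{\beta}) > 1/n \text{ for all } i\}$, and observe that $F$ is the gradient of the strictly concave function $\Phi(\boldsymbol{t}) = \frac{1}{n}\sum_{i=1}^n \ln(1+\boldsymbol{t}^T\boldsymbol{g}(\boldsymbol{X}_i,Y_i,\boldsymbol{\beta}))$; indeed the Hessian of $\Phi$ is $-\frac{1}{n}\sum_i \frac{\boldsymbol{g}\boldsymbol{g}^T}{(1+\boldsymbol{t}^T\boldsymbol{g})^2}$, which is negative definite by condition \textbf{C3}. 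Hence any critical point is the unique global maximizer, which establishes uniqueness of the solution to Eq. \eqref{5bis}.

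For existence I would use condition \textbf{C2}: since $\boldsymbol{0}_q$ lies in the convex hull of the $\boldsymbol{g}(\boldsymbol{X}_i,Y_i,\boldsymbol{\beta})$, one checks that $\Phi$ cannot be increased indefinitely, i.e. it is coercive on its domain (as $\boldsymbol{t}$ approaches the boundary of $\mathcal{D}_{\boldsymbol{\beta}}$ or grows in norm, $\Phi(\boldsymbol{t})\to-\infty$), so the maximum is attained at an interior point, which is then the required solution $\boldsymbol{t}(\boldsymbol{\beta})$. Having obtained $\boldsymbol{t}(\boldsymbol{\beta})$, I would substitute it back into Eq. \eqref{0.4BIS} to get the weights $p_i(\boldsymbol{\beta})$, verify they satisfy the constraints \eqref{0.3} and \eqref{0.4} (this is immediate from the derivation preceding the lemma together with Eq. \eqref{5bis}), and that $0 \le p_i(\boldsymbol{\beta}) \le 1$; then the standard concavity argument for $\prod_i p_i$ (equivalently $\sum_i \ln p_i$) subject to the linear constraints — comparing with any other feasible $(p_1,\dots,p_n)$ via Jensen's inequality — shows this is the unique maximizer of the problem \eqref{0.2}--\eqref{0.4}.

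For the final claim, continuity of $\boldsymbol{t}(\boldsymbol{\beta})$ in $\boldsymbol{\beta}$, I would argue by an implicit-function-type / maximum-theorem argument: the map $(\boldsymbol{\beta},\boldsymbol{t}) \mapsto F(\boldsymbol{t})$ is jointly continuous (in fact smooth) wherever defined, the maximizer is unique for each $\boldsymbol{\beta}$, and the domains $\mathcal{D}_{\boldsymbol{\beta}}$ vary upper-hemicontinuously; Berge's maximum theorem then gives that $\boldsymbol{\beta}\mapsto\boldsymbol{t}(\boldsymbol{\beta})$ is continuous. Alternatively, since the Jacobian of the system \eqref{5bis} with respect to $\boldsymbol{t}$ is exactly the negative-definite (hence invertible) Hessian of $\Phi$, the implicit function theorem applies directly and even yields differentiability.

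The main obstacle I expect is the existence part: carefully showing that $\Phi$ attains its supremum in the interior of $\mathcal{D}_{\boldsymbol{\beta}}$ rather than escaping to the boundary. This requires using \textbf{C2} in a quantitative way — writing $\boldsymbol{0}_q = \sum_i \alpha_i \boldsymbol{g}(\boldsymbol{X}_i,Y_i,\boldsymbol{\beta})$ with $\alpha_i \ge 0$, $\sum_i\alpha_i = 1$, and combining this with the concavity of $\ln$ to bound $\Phi$ from above — and controlling the behaviour near the facets of the polytope where some $1+\boldsymbol{t}^T\boldsymbol{g}(\boldsymbol{X}_i,Y_i,\boldsymbol{\beta})$ tends to the threshold. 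Since this is precisely the content of \cite{qila94}, in the write-up I would sketch the concavity/coercivity argument and refer to that source for the full technical details.
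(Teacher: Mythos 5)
The paper itself offers no proof of Lemma \ref{lemma1}: it is quoted from \cite{qila94}, so your sketch has to be judged against the standard Qin--Lawless/Owen argument, which is indeed the route you take. The uniqueness part (strict concavity of $\Phi$ from \textbf{C3}), the primal uniqueness part (strict concavity of $\sum_i\ln p_i$ on the convex feasible set), and the continuity part via the implicit function theorem (the Jacobian of the left-hand side of \eqref{5bis} in $\boldsymbol{t}$ is $-\frac1n\sum_i\boldsymbol{g}_i\boldsymbol{g}_i^T/(1+\boldsymbol{t}^T\boldsymbol{g}_i)^2$, writing $\boldsymbol{g}_i$ for $\boldsymbol{g}(\boldsymbol{X}_i,Y_i,\boldsymbol{\beta})$, and this is invertible by \textbf{C3}) are all correct; the Berge maximum-theorem alternative is vaguer (you would need upper \emph{and} lower hemicontinuity of the constraint correspondence), so the IFT version is the one to keep.

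The genuine gap is in the existence step. On your domain $\mathcal{D}_{\boldsymbol{\beta}}$, cut off at the threshold $1/n$, the boundary consists of points where some $1+\boldsymbol{t}^T\boldsymbol{g}_i=1/n$, and there $\Phi$ is perfectly finite (the offending term contributes $-\ln n$); so the claim that $\Phi\to-\infty$ at the boundary of $\mathcal{D}_{\boldsymbol{\beta}}$ is false, the maximizer is not forced into the interior, and your argument does not yet produce a stationary point. The standard fix is to maximize $\Phi$ over the larger set $\{\boldsymbol{t}:1+\boldsymbol{t}^T\boldsymbol{g}_i>0,\ i=1,\dots,n\}$: if $\boldsymbol{0}_q$ is an \emph{interior} point of the convex hull of the $\boldsymbol{g}_i$ (the reading of \textbf{C2} that is actually needed --- if $\boldsymbol{0}_q$ merely lies on the boundary of the hull there is a direction $\boldsymbol{\theta}$ with $\boldsymbol{\theta}^T\boldsymbol{g}_i\geq0$ for all $i$, the domain is unbounded and \eqref{5bis} may have no solution), this set is bounded and $\Phi\to-\infty$ as any $1+\boldsymbol{t}^T\boldsymbol{g}_i\downarrow0$, so the supremum is attained at an interior critical point. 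The constraint $1+\boldsymbol{t}^T\boldsymbol{g}_i\geq1/n$ must not be imposed beforehand; it falls out a posteriori, since at any solution of \eqref{5bis} one has $\sum_ip_i(\boldsymbol{\beta})=\frac1n\sum_i\frac{1}{1+\boldsymbol{t}^T(\boldsymbol{\beta})\boldsymbol{g}_i}=1-\boldsymbol{t}^T(\boldsymbol{\beta})\cdot\boldsymbol{0}_q=1$, so the positive weights \eqref{0.4BIS} sum to one, each lies in $(0,1]$, and the inequality holds automatically; this same identity is what verifies \eqref{0.3} alongside \eqref{0.4} before you invoke Jensen to get uniqueness for the problem \eqref{0.2}--\eqref{0.4}. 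With that correction your outline matches the argument the paper delegates to \cite{qila94}.
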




Thus, the kernel of the empirical log-likelihood function is
\begin{equation}
\ell_{E,n}\left( \boldsymbol{\beta}\right) =-\sum\limits_{i=1}^{n} \log \left(  1+\boldsymbol{t}^{T}\left( \bm \beta \right) \boldsymbol{g}\left( \boldsymbol{X}_{i},Y_{i},\bm \beta \right) \right) . \label{0.5}
\end{equation}

Suppose that the logistic regression model holds and consider the problem of testing
\begin{equation}
H_{0}:\bm \beta =\bm \beta_{0}\text{ versus }H_{1}:\bm \beta \neq \bm \beta_{0}. \label{0.6}
\end{equation}

Based on Qin and Lawless \cite{qila94}, the ELRT for testing (\ref{0.6}) is given by
\[ \emph{L}_{E,n}(\widehat{\bm \beta}_{E,n},\bm \beta_{0})=2\ell_{E,n}(\widehat{\bm \beta}_{E,n})-2\ell_{E,n}
(\bm \beta_{0}).\]


Then, we are comparing the likelihood of the sample for $\bm \beta_0$ and $\widehat{\bm \beta}_{E,n}$. It is obvious that the maximum likelihood is attained when $\boldsymbol{t}(\bm \beta )= \boldsymbol{0};$ if this is so, then $p_i(\bm \beta ) ={1\over n}.$ For this to happen, it is necessary that there is a solution for

$$ \sum\limits_{i=1}^{n} \boldsymbol{g}\left(  \boldsymbol{X}_{i},Y_{i},\bm \beta \right)  =\boldsymbol{0}_q, $$ for some $\bm \beta $. It can be shown that as the number of parameters and the dimension of $\boldsymbol{g}$ are the same, then such system has solution with probability approaching to 1 in large samples (see for instance \cite{qila94}). Thus, $\boldsymbol{t}(\widehat{\bm \beta}_{E,n})=\boldsymbol{0},$ and hence,


\begin{equation}\label{L}
\boldsymbol{p}(\widehat{\bm \beta}_{E,n})=\boldsymbol{u}=\left(  \tfrac{1}{n},....,\tfrac{1}{n}\right)  ^{T}.
\end{equation}

Therefore,

\[ \ell_{E,n}(\widehat{\bm \beta}_{E,n})=0\]
and thus,
\begin{equation}\label{0.7}
\emph{L}_{E,n}(\widehat{\bm \beta}_{E,n},\bm \beta_{0})=-2\ell_{E,n}(\bm \beta_{0}).
\end{equation}

Let us consider the following conditions:

\begin{itemize}
\item {\bf C4:} The matrix $E\left[ \boldsymbol{g}\left(  \boldsymbol{X}_{i},Y_{i},\bm \beta \right) \boldsymbol{g}^T\left(  \boldsymbol{X}_{i},Y_{i},\bm \beta \right) \right] $ is positive definite.
\item {\bf C5:} ${\partial \boldsymbol{g}\left(  \boldsymbol{X}_{i},Y_{i},\bm \beta \right) \over \partial \bm \beta }$ is continuous in a neighborhood of the true parameter $\bm \beta_0.$
\item {\bf C6:} $\| {\partial \boldsymbol{g}\left(  \boldsymbol{X}_{i},Y_{i},\bm \beta \right) \over \partial \bm \beta } \| $ and $\| \boldsymbol{g}\left(  \boldsymbol{X}_{i},Y_{i},\bm \beta \right)^3 \| $ are bounded by some integrable function $G_1\left[ \left(  \boldsymbol{X}_{i},Y_{i} \right) \right] $ in a neighborhood of the true parameter $\bm \beta_0$.
\item {\bf C7:} The rank of $E\left[ {\partial \boldsymbol{g}\left(  \boldsymbol{X}_{i},Y_{i},\bm \beta \right) \over \partial \bm \beta }\right] $ is $q$.
\item {\bf C8:} ${ \partial^2 \boldsymbol{g}\left(  \boldsymbol{X}_{i},Y_{i},\bm \beta \right) \over \partial \bm \beta \partial \bm \beta^T } $ is continuous in a neighborhood of the true parameter $\bm \beta_0.$
\item {\bf C9:} $\| {\partial^2 \boldsymbol{g}\left(  \boldsymbol{X}_{i},Y_{i},\bm \beta \right) \over \partial \bm \beta \partial \bm \beta^T } \| $ is bounded by some integrable function $G_2[\left(  \boldsymbol{X}_{i},Y_{i} \right) ] $ in a neighborhood of the true parameter $\bm \beta_0$.
\end{itemize}

Now, the following result was established in \cite{qila94}.

\begin{theorem}
Under the conditions {\bf C1}-{\bf C9}, the ELRT for testing $H_0: \bm \beta = \bm \beta_0$ versus $H_1: \beta \ne \beta_0$ given by $L_{E,n}(\widehat{\bm \beta}_{E,n},\bm \beta_{0})$ defined in Eq. \eqref{0.7} satisfies

$$ L_{E,n}(\widehat{\bm \beta}_{E,n},\bm \beta_{0}) \underset{n\rightarrow\infty}{\overset{\mathcal{L}}{\rightarrow}} \chi^2_q$$ if $H_0$ is true.
\end{theorem}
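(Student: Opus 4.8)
The plan is to follow the classical empirical-likelihood argument of Owen \cite{owe88,owe90} and Qin and Lawless \cite{qila94}, exploiting that, by Eq. \eqref{0.7}, the statistic equals
\[
L_{E,n}(\widehat{\bm\beta}_{E,n},\bm\beta_{0})=-2\ell_{E,n}(\bm\beta_{0})=2\sum_{i=1}^{n}\log\bigl(1+\boldsymbol{t}^{T}(\bm\beta_{0})\,\boldsymbol{g}(\boldsymbol{X}_{i},Y_{i},\bm\beta_{0})\bigr),
\]
so the whole analysis can be carried out at the \emph{fixed} point $\bm\beta_{0}$. Throughout I would abbreviate $\boldsymbol{g}_{i}:=\boldsymbol{g}(\boldsymbol{X}_{i},Y_{i},\bm\beta_{0})$, $\bar{\boldsymbol{g}}_{n}:=\tfrac1n\sum_{i=1}^{n}\boldsymbol{g}_{i}$, $S_{n}:=\tfrac1n\sum_{i=1}^{n}\boldsymbol{g}_{i}\boldsymbol{g}_{i}^{T}$, and $\boldsymbol{t}:=\boldsymbol{t}(\bm\beta_{0})$ for the solution of Eq. \eqref{5bis} supplied by Lemma \ref{lemma1}. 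Under $H_{0}$, condition {\bf C1} gives $E[\boldsymbol{g}_{i}]=\boldsymbol{0}_{q}$ and {\bf C4} gives that $V:=E[\boldsymbol{g}_{i}\boldsymbol{g}_{i}^{T}]$ is positive definite; hence by the multivariate central limit theorem $\sqrt{n}\,\bar{\boldsymbol{g}}_{n}\overset{\mathcal{L}}{\longrightarrow}N(\boldsymbol{0}_{q},V)$ and by the law of large numbers $S_{n}\overset{P}{\longrightarrow}V$.

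The first substantive step is to establish the stochastic orders $\boldsymbol{t}=O_{p}(n^{-1/2})$ and $\max_{1\le i\le n}\|\boldsymbol{g}_{i}\|=o_{p}(n^{1/2})$. The second follows from the standard maximal inequality once $E\|\boldsymbol{g}_{i}\|^{2}<\infty$, which is ensured by {\bf C6}. For the first I would use Owen's device: writing $\boldsymbol{t}=\rho\,\boldsymbol{\nu}$ with $\|\boldsymbol{\nu}\|=1$, left-multiplying Eq. \eqref{5bis} by $\boldsymbol{t}^{T}$ and using $\tfrac{x}{1+x}=x-\tfrac{x^{2}}{1+x}$ gives $\boldsymbol{\nu}^{T}\bar{\boldsymbol{g}}_{n}\ge\rho\,\boldsymbol{\nu}^{T}S_{n}\boldsymbol{\nu}\,\bigl(1+\rho\max_{i}|\boldsymbol{\nu}^{T}\boldsymbol{g}_{i}|\bigr)^{-1}$, which, together with $\bar{\boldsymbol{g}}_{n}=O_{p}(n^{-1/2})$, the positive definiteness of $V$ (so that $\boldsymbol{\nu}^{T}S_{n}\boldsymbol{\nu}$ is bounded below in probability, using {\bf C3}--{\bf C4}) and $\max_{i}\|\boldsymbol{g}_{i}\|=o_{p}(n^{1/2})$, forces $\rho=O_{p}(n^{-1/2})$. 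Here {\bf C2} is what guarantees that $\boldsymbol{t}$ exists with all implied weights $p_{i}(\bm\beta_{0})=\tfrac1n(1+\boldsymbol{t}^{T}\boldsymbol{g}_{i})^{-1}$ nonnegative. A consequence that I will use repeatedly is $\max_{i}|\boldsymbol{t}^{T}\boldsymbol{g}_{i}|\le\|\boldsymbol{t}\|\max_{i}\|\boldsymbol{g}_{i}\|=o_{p}(1)$, which both legitimises the Taylor expansions below and keeps the denominators $1+\boldsymbol{t}^{T}\boldsymbol{g}_{i}$ bounded away from $0$.

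Next I would expand the estimating equation \eqref{5bis}. Using $(1+x)^{-1}=1-x+x^{2}(1+x)^{-1}$,
\[
\boldsymbol{0}_{q}=\bar{\boldsymbol{g}}_{n}-S_{n}\boldsymbol{t}+\frac1n\sum_{i=1}^{n}\boldsymbol{g}_{i}\,\frac{(\boldsymbol{t}^{T}\boldsymbol{g}_{i})^{2}}{1+\boldsymbol{t}^{T}\boldsymbol{g}_{i}},
\]
and the remainder has norm at most $\|\boldsymbol{t}\|^{2}\bigl(\tfrac1n\sum_{i}\|\boldsymbol{g}_{i}\|^{3}\bigr)\max_{i}(1+\boldsymbol{t}^{T}\boldsymbol{g}_{i})^{-1}=O_{p}(n^{-1})$, using {\bf C6} for $\tfrac1n\sum_{i}\|\boldsymbol{g}_{i}\|^{3}=O_{p}(1)$; hence $\boldsymbol{t}=S_{n}^{-1}\bar{\boldsymbol{g}}_{n}+o_{p}(n^{-1/2})$. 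Then I would expand $\log(1+x)=x-\tfrac12x^{2}+O(|x|^{3})$ in $-2\ell_{E,n}(\bm\beta_{0})$; the cubic terms sum to $O_{p}(n^{-1/2})$ since $\sum_{i}|\boldsymbol{t}^{T}\boldsymbol{g}_{i}|^{3}\le\|\boldsymbol{t}\|^{3}\sum_{i}\|\boldsymbol{g}_{i}\|^{3}$, so
\[
-2\ell_{E,n}(\bm\beta_{0})=2n\,\boldsymbol{t}^{T}\bar{\boldsymbol{g}}_{n}-n\,\boldsymbol{t}^{T}S_{n}\boldsymbol{t}+o_{p}(1)=n\,\bar{\boldsymbol{g}}_{n}^{T}S_{n}^{-1}\bar{\boldsymbol{g}}_{n}+o_{p}(1),
\]
after inserting $\boldsymbol{t}=S_{n}^{-1}\bar{\boldsymbol{g}}_{n}+o_{p}(n^{-1/2})$ and cancelling. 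Finally, combining $S_{n}^{-1}\overset{P}{\longrightarrow}V^{-1}$ with $\sqrt{n}\,\bar{\boldsymbol{g}}_{n}\overset{\mathcal{L}}{\longrightarrow}N(\boldsymbol{0}_{q},V)$ and Slutsky's theorem gives $n\,\bar{\boldsymbol{g}}_{n}^{T}S_{n}^{-1}\bar{\boldsymbol{g}}_{n}\overset{\mathcal{L}}{\longrightarrow}Z^{T}V^{-1}Z$ with $Z\sim N(\boldsymbol{0}_{q},V)$; since $V^{-1/2}Z\sim N(\boldsymbol{0}_{q},I_{q})$, this limit is $\chi^{2}_{q}$, which is the assertion.

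The step I expect to be the main obstacle is the rigorous proof of $\boldsymbol{t}(\bm\beta_{0})=O_{p}(n^{-1/2})$ together with the uniform control $\max_{i}|\boldsymbol{t}^{T}\boldsymbol{g}_{i}|\overset{P}{\longrightarrow}0$ that is needed both to justify the two Taylor expansions and to keep all denominators away from zero; this is exactly where {\bf C2}, {\bf C3}, {\bf C4} and the moment bound {\bf C6} enter. The remaining regularity conditions {\bf C5}, {\bf C7}--{\bf C9} are not strictly needed for the null distribution at the fixed point $\bm\beta_{0}$ — they become relevant only when $\bm\beta$ itself must be profiled out — and can be invoked here simply because they are part of the hypotheses of \cite{qila94}.
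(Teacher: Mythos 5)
Your proposal is correct and follows essentially the same route as the paper: the paper itself merely cites Qin and Lawless for this particular theorem, but your argument (Owen's device to get $\|\boldsymbol{t}(\bm\beta_0)\|=O_p(n^{-1/2})$ and $\max_i|\boldsymbol{t}^T(\bm\beta_0)\boldsymbol{g}(\boldsymbol{X}_i,Y_i,\bm\beta_0)|=o_p(1)$, the expansion of \eqref{5bis} to obtain $\boldsymbol{t}(\bm\beta_0)=S_n^{-1}\bar{\boldsymbol{g}}_n+o_p(n^{-1/2})$, the quadratic approximation of the statistic, and the CLT plus Slutsky) is exactly the argument the paper uses to prove Theorem \ref{theo1}, of which this statement is the special case $\phi(x)=x\log x-x+1$. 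The only cosmetic difference is that you expand $\log(1+x)$ directly, so a linear term appears and must be cancelled after substituting the expression for $\boldsymbol{t}(\bm\beta_0)$, whereas the paper expands $\phi(1+x)/(1+x)$, whose linear term vanishes automatically because $\phi(1)=\phi'(1)=0$.
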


As a consequence, an approximate confidence region of the nominal level
$1-\alpha$ for $\bm \beta_{0}$ is given by

\[ C_{1-\alpha}=\{ \bm \beta \text{ }\boldsymbol{/}\text{ }\emph{L}_{E,n}(\widehat{ \bm \beta}_{E,n},\bm \beta )<\chi_{q,1-\alpha}^{2}\} .\]

If inferences on a subset of $k$ coefficients are of interest, with $k<q$, the remaining $q-k$ coefficients can be profiled out, treating them as nuisance parameters, and then the asymptotic distribution is a chi-square distribution with $k$ degrees of freedom \cite{qila94}.

The rest of the paper is as follows: Section 2 is devoted to introduce and study a family of empirical phi-divergence test statistics based on phi-divergence measures, as well as to obtain its asymptotic distribution and an approximation to the power function. In Section 3, we present a simulation study to compare the behavior of the different members of the family of empirical phi-divergence test statistics introduced in Section 2. We finish with some conclusions.

\section{Empirical phi-divergence test statistics}

Consider two probability distributions $\boldsymbol{p}^T=(p_1, ..., p_n)$ and $\boldsymbol{q}^T=(q_1, ..., q_n)$ defined on the same finite referential set. We define the {\bf Kullback-Leibler divergence} between $\boldsymbol{p}$ and $\boldsymbol{q}$ as

$$ d_{K-L}(\boldsymbol{p}, \boldsymbol{q})= \sum_{i=1}^n p_i \log {p_i\over q_i},$$ where $0\log 0=0$ by convention; it measures the extent $\boldsymbol{q}$ separates from $\boldsymbol{p}$. Consider the $n$-dimensional probability vector $\boldsymbol{u}$ defined in (\ref{L}), as well as the $n$-dimensional probability vector

\[ \boldsymbol{p}\left(  \bm \beta \right)  =\left( p_{1}\left( \bm \beta \right)  ,...,p_{n}\left(  \bm \beta \right)
\right)^{T}\]
with $p_{i}\left( \bm \beta \right) , i=1,...,n$, defined in (\ref{0.4BIS}). Then, the Kullback-Leibler divergence between $\boldsymbol{u}$ and $\boldsymbol{p}\left( \bm \beta \right) $ is
given by

\begin{align*}
d_{K-L}(\boldsymbol{u,p}\left( \bm \beta \right) ) & =\sum_{i=1}^{n} u_{i}\log\frac{u_{i}}{p_{i}\left( \bm \beta \right) }\\
& =\sum_{i=1}^{n}\frac{1}{n}\log\frac{\frac{1}{n}}{\frac{1}{n}\frac{1}{1+\boldsymbol{t}^{T}\left( \bm \beta \right) g\left( \boldsymbol{X}_{i},Y_{i},\bm \beta \right) }}\\
& =\frac{1}{n}\sum_{i=1}^{n}\log\left( 1+\boldsymbol{t}^{T}\left( \bm \beta \right) g\left( \boldsymbol{X}_{i},Y_{i},\bm \beta \right) \right) .
\end{align*}
Therefore,
\begin{equation}
\emph{L}_{E,n}(\widehat{\bm \beta}_{E,n},\bm \beta_{0})=2n d_{K-L}(\boldsymbol{u},\boldsymbol{p}(\bm \beta_{0})).
\label{0.8}
\end{equation}
On the other hand, it must be taken into account that the Kullback-Leibler divergence is a member of a broad family of measures of divergence known as {\bf phi-divergence measures}. For introducing phi-divergence measures, let us denote by $\Phi^{\ast}$ the class of all twice differentiable convex functions $\phi:\mathbb{R}^{+}\rightarrow\mathbb{R}$ such that $\phi\left(
1\right)  =0$, $\phi^{\prime\prime}\left( 1\right) >0$, $0\phi\left( 0/0\right) =0$, and $0\phi\left( p/0\right) =p\lim_{u\rightarrow\infty}\frac{\phi\left(  u\right)  }{u}$. For $\phi \in \Phi^{\ast},$ we define the $\phi $-divergence measure between two probability vectors $\boldsymbol{p}, \boldsymbol{q}$ by

\begin{equation}
d_{\phi }(\boldsymbol{p}, \boldsymbol{q})= \sum_{i=1}^{n} q_i \phi \left( {p_i\over q_i}\right).
\end{equation}

In this context, when $\phi\left( x\right) =x\log x-x+1$, the Kullback-Leibler divergence measure is recovered. For more details about $\phi $-divergence measures see \cite{crpa02, par06}. Note that for $\phi\left( x\right) =x\log x-x+1$, it is $\phi ''(1)=1.$ Thus,

\begin{equation}
\emph{L}_{E,n}(\widehat{\bm \beta}_{E,n},\bm \beta_{0})=2n d_{K-L}(\boldsymbol{u},\boldsymbol{p}(\bm \beta_{0}))=\frac{2n}{\phi^{\prime\prime}(1)}d_{\phi}(\boldsymbol{u},\boldsymbol{p}(\bm \beta_{0})).
\end{equation}

If instead of $\phi\left( x\right) =x\log x-x+1$ we consider another functions $\phi$ belonging to $\Phi^{\ast},$ we obtain a new family of empirical test statistics for testing (\ref{0.6}): the empirical phi-divergence test statistics.

\begin{definition}
Assuming {\bf C2} and {\bf C3}, the family of empirical phi-divergence test statistics for testing (\ref{0.6}) is defined by
\begin{equation}
T_{n}^{\phi}(\bm \beta_{0})= \frac{2n}{\phi^{\prime\prime}(1)}d_{\phi}(\boldsymbol{u},\boldsymbol{p}(\bm \beta_{0})) = \frac{2}{\phi^{\prime\prime}(1)}\sum_{i=1}^n {1\over 1+\boldsymbol{t}^T(\bm \beta_0)\boldsymbol{g}\left(  \boldsymbol{X}_{i},Y_{i},\bm \beta_{0} \right)} \phi \left( 1+\boldsymbol{t}^T(\bm \beta_0)\boldsymbol{g}\left(  \boldsymbol{X}_{i},Y_{i},\bm \beta_{0} \right) \right) ,\label{0.9}
\end{equation}
with $\phi \in \Phi^{\ast}$.
\end{definition}

Remark that {\bf C2} and {\bf C3} are needed in order to apply Lemma \ref{lemma1} and hence ensure that $\boldsymbol{t}(\bm \beta_0)$ is well-defined.

For every $\phi\in \Phi^{\ast}$, the
function
\[ \Psi\left( x\right) =\phi\left( x\right) -\phi^{\prime}\left( 1\right) \left( x-1\right) \]
also belongs to $\Phi^{\ast}.$ Moreover, it can be easily seen that
\[ T_{n}^{\phi}(\boldsymbol{\beta}_{0})= T_{n}^{\psi}(\boldsymbol{\beta}_{0})\]
and $\psi$ has the additional property that $\psi^{\prime}\left( 1\right) =0.$ Since the two divergence measures are equivalent, we can consider the set $\Phi^{\ast}$ to be equivalent to the set $\Phi=\Phi^{\ast}\cap\left\{ \phi:\phi^{\prime}\left( 1\right) =0\right\} $. In what follows, we shall
assume that $\phi\in\Phi.$

\subsection{Asymptotic distribution}

In the following theorem we present the asymptotic distribution of
$T_{n}^{\phi}(\bm \beta_{0}), \phi \in \Phi $ when the null hypothesis in (\ref{0.6}) holds.

\begin{theorem}\label{theo1}
Under $H_{0}$ given in (\ref{0.6}), and conditions {\bf C1}-{\bf C9}, the asymptotic distribution of the
empirical phi-divergence test statistics $T_{n}^{\phi}(\bm \beta_{0})$ satisfies
\[ T_{n}^{\phi}(\bm \beta_{0})\underset{n\rightarrow\infty}{\overset{\mathcal{L}}{\rightarrow}}\chi
_{q}^{2}, \forall \phi \in \Phi . \]
\end{theorem}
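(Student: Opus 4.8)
\emph{Proof strategy.} The plan is to deduce the result from the Qin--Lawless theorem stated above by showing that, under $H_{0}$, $T_{n}^{\phi}(\bm\beta_{0})$ and the ELRT statistic $L_{E,n}(\widehat{\bm\beta}_{E,n},\bm\beta_{0})$ coincide up to a term of order $o_{p}(1)$, and then invoking Slutsky's lemma. Write $\boldsymbol{g}_{i}:=\boldsymbol{g}(\boldsymbol{X}_{i},Y_{i},\bm\beta_{0})$ and $\gamma_{i}:=\boldsymbol{t}^{T}(\bm\beta_{0})\boldsymbol{g}_{i}$, so that $p_{i}(\bm\beta_{0})=\frac{1}{n}(1+\gamma_{i})^{-1}$ and, by \eqref{0.9}, $T_{n}^{\phi}(\bm\beta_{0})=\frac{2}{\phi''(1)}\sum_{i=1}^{n}h_{\phi}(\gamma_{i})$ with $h_{\phi}(\gamma):=\phi(1+\gamma)/(1+\gamma)$; let also $\phi_{0}(x)=x\log x-x+1$ be the Kullback--Leibler generator, for which $\phi_{0}''(1)=1$.

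First I would collect three facts that come out of the analysis behind the Qin--Lawless theorem under conditions {\bf C1}--{\bf C9}: (i) $\|\boldsymbol{t}(\bm\beta_{0})\|=O_{p}(n^{-1/2})$ under $H_{0}$; (ii) $\frac{1}{n}\sum_{i=1}^{n}\boldsymbol{g}_{i}\boldsymbol{g}_{i}^{T}\overset{P}{\longrightarrow}E[\boldsymbol{g}_{i}\boldsymbol{g}_{i}^{T}]$, which is positive definite by {\bf C4}; and (iii) $\max_{1\le i\le n}\|\boldsymbol{g}_{i}\|=o_{p}(n^{1/3})$, by the integrability of $\|\boldsymbol{g}\|^{3}$ from {\bf C6}. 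Then (i) and (iii) yield $\max_{i}|\gamma_{i}|\le\|\boldsymbol{t}(\bm\beta_{0})\|\max_{i}\|\boldsymbol{g}_{i}\|=o_{p}(n^{-1/6})=o_{p}(1)$, while (i) and (ii) yield $\sum_{i=1}^{n}\gamma_{i}^{2}=n\,\boldsymbol{t}^{T}(\bm\beta_{0})\bigl(\frac{1}{n}\sum_{i=1}^{n}\boldsymbol{g}_{i}\boldsymbol{g}_{i}^{T}\bigr)\boldsymbol{t}(\bm\beta_{0})=O_{p}(1)$.

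Next I would expand $h_{\phi}$ around $0$. For $\phi\in\Phi$ one has $\phi(1)=\phi'(1)=0$, hence $h_{\phi}(0)=0$ and $h_{\phi}'(0)=\phi'(1)=0$, and a short computation gives $h_{\phi}''(0)=\phi''(1)-2\phi'(1)+2\phi(1)=\phi''(1)$; since $\phi$ is twice differentiable this gives $\frac{2}{\phi''(1)}h_{\phi}(\gamma)=\gamma^{2}+r_{\phi}(\gamma)$ for $\gamma$ in a fixed neighbourhood of $0$, with $r_{\phi}(\gamma)/\gamma^{2}\to0$ as $\gamma\to0$. The same holds for $\phi_{0}\in\Phi$; moreover, left-multiplying the estimating equation \eqref{5bis} by $\boldsymbol{t}^{T}(\bm\beta_{0})$ produces the exact identity $\sum_{i=1}^{n}\gamma_{i}/(1+\gamma_{i})=0$, so that, since $h_{\phi_{0}}(\gamma)=\log(1+\gamma)-\gamma/(1+\gamma)$, one gets $\sum_{i=1}^{n}h_{\phi_{0}}(\gamma_{i})=\sum_{i=1}^{n}\log(1+\gamma_{i})$; by \eqref{0.5}--\eqref{0.7} this means $L_{E,n}(\widehat{\bm\beta}_{E,n},\bm\beta_{0})=2\sum_{i=1}^{n}h_{\phi_{0}}(\gamma_{i})=\sum_{i=1}^{n}\gamma_{i}^{2}+\sum_{i=1}^{n}r_{\phi_{0}}(\gamma_{i})$. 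Subtracting the two expansions,
\[
\bigl|T_{n}^{\phi}(\bm\beta_{0})-L_{E,n}(\widehat{\bm\beta}_{E,n},\bm\beta_{0})\bigr|=\Bigl|\sum_{i=1}^{n}\bigl(r_{\phi}(\gamma_{i})-r_{\phi_{0}}(\gamma_{i})\bigr)\Bigr|.
\]
Given $\varepsilon>0$, pick $\delta>0$ with $|r_{\phi}(\gamma)|+|r_{\phi_{0}}(\gamma)|\le\varepsilon\gamma^{2}$ for $|\gamma|<\delta$; on the event $\{\max_{i}|\gamma_{i}|<\delta\}$, whose probability tends to $1$, the right-hand side is at most $\varepsilon\sum_{i=1}^{n}\gamma_{i}^{2}$. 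As $\sum_{i=1}^{n}\gamma_{i}^{2}=O_{p}(1)$ and $\varepsilon$ is arbitrary, this forces $T_{n}^{\phi}(\bm\beta_{0})-L_{E,n}(\widehat{\bm\beta}_{E,n},\bm\beta_{0})=o_{p}(1)$; since $L_{E,n}(\widehat{\bm\beta}_{E,n},\bm\beta_{0})\underset{n\rightarrow\infty}{\overset{\mathcal{L}}{\rightarrow}}\chi_{q}^{2}$ under $H_{0}$, Slutsky's lemma gives $T_{n}^{\phi}(\bm\beta_{0})\underset{n\rightarrow\infty}{\overset{\mathcal{L}}{\rightarrow}}\chi_{q}^{2}$ for every $\phi\in\Phi$.

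The step I expect to demand the most care is not the Taylor expansion but the justification of the stochastic orders in the second paragraph --- above all $\|\boldsymbol{t}(\bm\beta_{0})\|=O_{p}(n^{-1/2})$ and $\max_{i}\|\boldsymbol{g}_{i}\|=o_{p}(n^{1/3})$, which must be extracted from the Qin--Lawless expansion under {\bf C1}--{\bf C9} --- together with keeping the remainder bounds $r_{\phi},r_{\phi_{0}}$ uniform over a neighbourhood of $0$ independent of $n$. With these in place, the rest is routine bookkeeping.
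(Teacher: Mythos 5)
Your proposal is correct, and it reaches the conclusion by a genuinely different route in its final step. You share with the paper the key device: the second--order expansion of $x\mapsto\phi(1+x)/(1+x)$ at $x=0$ (using $\phi(1)=\phi'(1)=0$, second derivative $\phi''(1)$) together with the stochastic orders $\|\boldsymbol{t}(\bm\beta_{0})\|=O_{p}(n^{-1/2})$, $\max_{i}|\gamma_{i}|=o_{p}(1)$ and $\sum_{i}\gamma_{i}^{2}=O_{p}(1)$. Where you diverge is the endgame: the paper substitutes the explicit expansion $\boldsymbol{t}(\bm\beta_{0})=(n\boldsymbol{S})^{-1}\sum_{i}\boldsymbol{g}_{i}+o_{p}(n^{-1/2})$ into the quadratic form and re-derives the $\chi_{q}^{2}$ limit from the CLT and the convergence of $\boldsymbol{S}$, in effect reproving the Qin--Lawless limit for the leading term; you instead observe that the ELRT is exactly $T_{n}^{\phi_{0}}$ for the Kullback--Leibler generator (your identity $\sum_{i}\gamma_{i}/(1+\gamma_{i})=0$ obtained by multiplying \eqref{5bis} by $\boldsymbol{t}^{T}(\bm\beta_{0})$ is correct and mirrors \eqref{0.8}), show $T_{n}^{\phi}(\bm\beta_{0})-L_{E,n}(\widehat{\bm\beta}_{E,n},\bm\beta_{0})=o_{p}(1)$ by an $\varepsilon$--$\delta$ control of the two Peano remainders on the event $\{\max_{i}|\gamma_{i}|<\delta\}$, and then quote the stated Qin--Lawless theorem plus Slutsky. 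Your treatment of the remainder is in fact tighter than the paper's (which sums the individual $o(\cdot)$ terms rather informally), and invoking Theorem 1 spares you the CLT step; the price is that your facts (i)--(iii) are imported rather than proved --- in particular $\|\boldsymbol{t}(\bm\beta_{0})\|=O_{p}(n^{-1/2})$ requires the Owen-type argument (writing $\boldsymbol{t}=\rho\boldsymbol{\theta}$ and bounding via the estimating equation and the smallest eigenvalue of the empirical second-moment matrix), which the paper carries out explicitly and which a fully self-contained write-up of your proof would need to include; your $\max_{i}\|\boldsymbol{g}_{i}\|=o_{p}(n^{1/3})$ from the third-moment bound in {\bf C6} is a legitimate (indeed slightly sharper) variant of the $o_{p}(n^{1/2})$ bound the paper uses.
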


\begin{proof}
A Taylor expansion of second order of


$$ f(x):={1\over 1+x} \phi (1+x)$$ around $x=0$ gives, taking into account that $\phi (1)=\phi'(1)=0$,

$$ f(x)={1\over 2} \phi''(1)x^2 + o(x^2),$$

Therefore, denoting $x:=\boldsymbol{t}^T(\bm \beta_0) \boldsymbol{g}\left(  \boldsymbol{X}_{i},Y_{i},\bm \beta_{0} \right) ,$ we obtain that

\begin{eqnarray*}
{1\over 1+ \boldsymbol{t}^T(\bm \beta_0) \boldsymbol{g}\left(  \boldsymbol{X}_{i},Y_{i},\bm \beta_{0} \right)} \phi (1+\boldsymbol{t}^T(\bm \beta_0) \boldsymbol{g}\left(  \boldsymbol{X}_{i},Y_{i},\bm \beta_{0} \right)) & = &  {1\over 2} \phi''(1) \boldsymbol{t}^T(\bm \beta_0) \boldsymbol{g}\left(  \boldsymbol{X}_{i},Y_{i},\bm \beta_{0} \right) \boldsymbol{g}^T\left(  \boldsymbol{X}_{i},Y_{i},\bm \beta_{0} \right) \boldsymbol{t}(\bm \beta_0) \\
& & + o( \boldsymbol{t}^T(\bm \beta_0) \boldsymbol{g}\left(  \boldsymbol{X}_{i},Y_{i},\bm \beta_{0} \right) \boldsymbol{g}^T\left(  \boldsymbol{X}_{i},Y_{i},\bm \beta_{0} \right) \boldsymbol{t}(\bm \beta_0) ).
\end{eqnarray*}

Hence,

\begin{equation}\label{A}
{2n d_{\phi }(\boldsymbol{u},\boldsymbol{p}(\bm \beta_{0}))\over \phi^{''} (1)} =  \sum_{i=1}^n \boldsymbol{t}^T (\bm \beta_{0}) \boldsymbol{g}\left(  \boldsymbol{X}_{i},Y_{i},\bm \beta_{0} \right) \boldsymbol{g}^T\left(  \boldsymbol{X}_{i},Y_{i},\bm \beta_{0} \right) \boldsymbol{t} (\bm \beta_{0}) + o\left( \sum_{i=1}^n \boldsymbol{t}^T (\bm \beta_{0}) \boldsymbol{g}\left(  \boldsymbol{X}_{i},Y_{i},\bm \beta_{0} \right) \boldsymbol{g}^T\left(  \boldsymbol{X}_{i},Y_{i},\bm \beta_{0} \right) \boldsymbol{t} (\bm \beta_{0})\right) .
\end{equation}

%

Let us write $\boldsymbol{t}(\bm \beta_0) = \rho \boldsymbol{\theta }(\bm \beta_0)$, where $\rho \geq 0$ and $||\boldsymbol{\theta }(\bm \beta_0)||=1.$ Now,

\begin{eqnarray*}
0 & = & \left\| \frac{1}{n}\sum\limits_{i=1}^{n}\frac{\boldsymbol{g}\left(  \boldsymbol{X}_{i},Y_{i},\bm \beta_0 \right) }{1+\rho \boldsymbol{\theta }^{T}\left( \bm \beta_0 \right)  \boldsymbol{g}\left( \boldsymbol{X}_{i},Y_{i},\bm \beta_0 \right) } \right\| \\ 
& \geq & \left|  \boldsymbol{\theta }^{T}\left( \bm \beta_0 \right) \frac{1}{n}\sum\limits_{i=1}^{n}\frac{\boldsymbol{g}\left( \boldsymbol{X}_{i},Y_{i},\bm \beta_0 \right) }{1+\rho \boldsymbol{\theta }^{T}\left( \bm \beta_0 \right) \boldsymbol{g}\left( \boldsymbol{X}_{i},Y_{i},\bm \beta_0 \right) } \right| \\ 
& = & \frac{1}{n} \left| \boldsymbol{\theta }^{T}\left( \bm \beta_0 \right) \left( \sum\limits_{i=1}^{n} \boldsymbol{g}\left( \boldsymbol{X}_{i},Y_{i},\bm \beta_0 \right)  - \rho \sum\limits_{i=1}^{n}\frac{\boldsymbol{g}\left( \boldsymbol{X}_{i},Y_{i},\bm \beta_0 \right) \boldsymbol{\theta }^{T}\left( \bm \beta_0 \right) \boldsymbol{g}\left( \boldsymbol{X}_{i},Y_{i},\bm \beta_0 \right) }{1+\rho \boldsymbol{\theta }^{T}\left( \bm \beta_0 \right) \boldsymbol{g}\left( \boldsymbol{X}_{i},Y_{i},\bm \beta_0 \right) } \right) \right| \\ 
& \geq & {\rho \over n} \boldsymbol{\theta }^{T}\left( \bm \beta_0 \right) \sum\limits_{i=1}^{n}\frac{\boldsymbol{g}\left( \boldsymbol{X}_{i},Y_{i},\bm \beta_0 \right) \boldsymbol{g}^T\left( \boldsymbol{X}_{i},Y_{i},\bm \beta_0 \right) }{1+\rho \boldsymbol{\theta }^{T}\left( \bm \beta_0 \right) \boldsymbol{g}\left( \boldsymbol{X}_{i},Y_{i},\bm \beta_0 \right) } \boldsymbol{\theta }\left( \bm \beta_0 \right) -{1\over n} \left| \boldsymbol{\theta }^{T}\left( \bm \beta_0 \right) \sum\limits_{i=1}^{n} \boldsymbol{g}\left( \boldsymbol{X}_{i},Y_{i},\bm \beta_0 \right) \right| \\ 
& \geq & {\rho \boldsymbol{\theta }^{T}\left( \bm \beta_0 \right) \boldsymbol{S} \boldsymbol{\theta }\left( \bm \beta_0 \right) \over 1+\rho Z_n} - {1\over n} \left| \boldsymbol{\theta }^{T}\left( \bm \beta_0 \right) \sum_{i=1}^n \boldsymbol{g}\left(  \boldsymbol{X}_{i},Y_{i},\bm \beta_0 \right) \right| ,
\end{eqnarray*}
where $Z_n= \max_{1\leq i\leq n} || \boldsymbol{g}\left(  \boldsymbol{X}_{i},Y_{i},\bm \beta_0 \right) ||$ and

\begin{equation}\label{defS} \boldsymbol{S}={1\over n} \sum_{i=1}^n \boldsymbol{g}\left( \boldsymbol{X}_{i},Y_{i},\bm \beta \right) \boldsymbol{g}^T\left( \boldsymbol{X}_{i},Y_{i},\bm \beta \right) .\end{equation}

Now, $\boldsymbol{\theta }^{T}\left( \bm \beta_0 \right) \boldsymbol{S} \boldsymbol{\theta }\geq \sigma_p + o_p(1)$, where $\sigma_p$ is the smallest (but positive by {\bf C4}) eigenvalue of $\bm \Sigma $, the covariance matrix of $ \boldsymbol{g}\left( \boldsymbol{X}_{i},Y_{i},\bm \beta_0 \right) .$ 

On the other hand, the second term is $O_p(n^{-1/2})$ by the Central Limit Theorem and {\bf C1}. Consequently, 

$$ {\rho \over 1+ \rho Z_n} = O_p(n^{-1/2}),$$ so that $\rho = || \boldsymbol{t} \left( \bm \beta_0 \right) || = O_p(n^{-1/2}).$ Now, it can be seen in \cite{owe90} that given $Y_i\geq 0$ i.i.d. and $E[Y_i^2]<\infty ,$ then ${\displaystyle \max_{1\leq i\leq n} Y_i = o(n^{1/2}).}$ Defining $Y_i:= |\boldsymbol{g}\left( \boldsymbol{X}_{i},Y_{i},\bm \beta \right) |,$ and by {\bf C4}, we conclude

$$ \max_{1\leq i\leq n} | \boldsymbol{t}^T \left( \bm \beta \right) \boldsymbol{g}\left( \boldsymbol{X}_{i},Y_{i},\bm \beta \right) |= O_p (n^{-1/2}) o(n^{1/2}) = o_p(1).$$ 

Then, denoting $\gamma_i=\boldsymbol{t}^{T}\left(
\bm \beta_0 \right)  \boldsymbol{g}\left(  \boldsymbol{X}_{i},Y_{i},\bm \beta_0 \right) ,$ we have

\begin{eqnarray*}
0 & = & \frac{1}{n}\sum\limits_{i=1}^{n}\frac{\boldsymbol{g}\left(  \boldsymbol{X}_{i},Y_{i},\bm \beta_0 \right)  }{1+\boldsymbol{t}^{T}\left(
\bm \beta_0 \right)  \boldsymbol{g}\left(  \boldsymbol{X}_{i},Y_{i},\bm \beta_0 \right) } \\
 & = & \frac{1}{n}\sum\limits_{i=1}^{n}\boldsymbol{g}\left(  \boldsymbol{X}_{i},Y_{i},\bm \beta_0 \right)  (1- \gamma_i  + {\gamma_i^2 \over (1+\gamma_i)}) \\
 & = & \frac{1}{n}\sum\limits_{i=1}^{n}\boldsymbol{g}\left(  \boldsymbol{X}_{i},Y_{i},\bm \beta \right) - \boldsymbol{S}\boldsymbol{t}\left(
\bm \beta_0 \right) + {1\over n} \sum\limits_{i=1}^{n}\boldsymbol{g}\left(  \boldsymbol{X}_{i},Y_{i},\bm \beta \right) {\gamma_i^2 \over 1 +\gamma_i}. 
\end{eqnarray*}

Now, it can be seen in \cite{owe90} that given $Y_i\geq 0$ i.i.d. and $E[Y_i^2]<\infty ,$ then ${\displaystyle {1\over n} \sum_{i=1}^n Y_i^3 = o(n^{1/2}).}$ Defining $Y_i:= |\boldsymbol{g}\left( \boldsymbol{X}_{i},Y_{i},\bm \beta \right) |,$ and by {\bf C4}, we conclude that the final term is bounded by

$$  {1\over n} \sum\limits_{i=1}^{n} || \boldsymbol{g}\left(  \boldsymbol{X}_{i},Y_{i},\bm \beta \right) ||^3 ||\boldsymbol{t}\left(
\bm \beta_0 \right) ||^2 |1 + \gamma_i|^{-1} = o(n^{1/2}) O_p(n^{-1}) O_p(1) = o_p(n^{-1/2}).$$ 
Hence, we conclude that

\begin{eqnarray*}
\boldsymbol{t} (\bm \beta_{0}) & = & {1\over n}\left( {1\over n} \sum_{i=1}^n \boldsymbol{g}\left(  \boldsymbol{X}_{i},Y_{i},\bm \beta_{0} \right) \boldsymbol{g}^T\left(  \boldsymbol{X}_{i},Y_{i},\bm \beta_{0} \right) \right)^{-1} \sum_{i=1}^n \boldsymbol{g}\left(  \boldsymbol{X}_{i},Y_{i},\bm \beta_{0} \right) + o_p(n^{-1/2})\\
& = & \left( \sum_{i=1}^n \boldsymbol{g}\left(  \boldsymbol{X}_{i},Y_{i},\bm \beta_{0} \right) \boldsymbol{g}^T\left(  \boldsymbol{X}_{i},Y_{i},\bm \beta_{0} \right) \right)^{-1} \sum_{i=1}^n \boldsymbol{g}\left(  \boldsymbol{X}_{i},Y_{i},\bm \beta_{0} \right) + o_p(n^{-1/2}) \\
& = & (n\boldsymbol{S})^{-1} \sum_{i=1}^n \boldsymbol{g}\left(  \boldsymbol{X}_{i},Y_{i},\bm \beta_{0} \right) + o_p(n^{-1/2}).
\end{eqnarray*}

Therefore, the first term in \eqref{A} can be written as

$$ \sum_{i=1}^n \left( \left( n\boldsymbol{S}\right)^{-1} \sum_{i=1}^n \boldsymbol{g}^T \left(  \boldsymbol{X}_{i},Y_{i},\bm \beta_{0} \right) + o_p(n^{-1/2}) \right)^T (n\boldsymbol{S}) \left( \left( n\boldsymbol{S} \right)^{-1} \sum_{i=1}^n \boldsymbol{g}\left(  \boldsymbol{X}_{i},Y_{i},\bm \beta_{0} \right) + o_p(n^{-1/2}) \right) $$

$$ = \sum_{i=1}^n \boldsymbol{g}^T \left(  \boldsymbol{X}_{i},Y_{i},\bm \beta_{0} \right) \left( n\boldsymbol{S}\right)^{-1}  \sum_{i=1}^n \boldsymbol{g}\left(  \boldsymbol{X}_{i},Y_{i},\bm \beta_{0} \right) + o_p(n^{-1/2}) $$ 

$$ = \sqrt{n} \left( {1\over n} \sum_{i=1}^n \boldsymbol{g}^T \left(  \boldsymbol{X}_{i},Y_{i},\bm \beta_{0} \right) \right) \boldsymbol{S}^{-1}  \sqrt{n} \left( {1\over n}\sum_{i=1}^n \boldsymbol{g}\left(  \boldsymbol{X}_{i},Y_{i},\bm \beta_{0} \right) \right) + o_p(n^{-1/2}) .$$ 

Now, $\boldsymbol{S}$ defined in \eqref{defS} satisfies

$$ \boldsymbol{S} \underset{n\rightarrow\infty}{\overset{\mathcal{P}}{\longrightarrow }} E\left[ \boldsymbol{g}\left(  \boldsymbol{X},Y,\bm \beta_{0} \right) \boldsymbol{g}^T\left(  \boldsymbol{X},Y,\bm \beta_{0} \right) \right] ,$$ 
and applying the Central Limit Theorem, we get under the null hypothesis

$$ \sqrt{n} {1\over n} \sum_{i=1}^n \boldsymbol{g}\left(  \boldsymbol{X}_{i},Y_{i},\bm \beta_{0} \right) \underset{n\rightarrow\infty}{\overset{\mathcal{L}}{\longrightarrow }} {\cal N}\left( \boldsymbol{0}, E\left[ \boldsymbol{g}\left(  \boldsymbol{X},Y,\bm \beta_{0} \right) \boldsymbol{g}^T\left(  \boldsymbol{X},Y,\bm \beta_{0} \right) \right] \right) .$$ Therefore,

$$ o\left( \sum_{i=1}^n \boldsymbol{t}^T (\bm \beta_{0}) \boldsymbol{g}\left(  \boldsymbol{X}_{i},Y_{i},\bm \beta_{0} \right) \boldsymbol{g}^T\left(  \boldsymbol{X}_{i},Y_{i},\bm \beta_{0} \right) \boldsymbol{t} (\bm \beta_{0})\right) = o(O_p(1))= o_p(1),$$ 
and
$$ {2n d_{\phi }(\boldsymbol{u},\boldsymbol{p}(\bm \beta_{0}))\over \phi^{''}(1)} \underset{n\rightarrow\infty}{\overset{\mathcal{L}}{\longrightarrow }} \chi^2_{q}.$$
\end{proof}

Based on the previous theorem, we reject the null hypothesis given in (\ref{0.6}) if

\begin{equation}
T_{n}^{\phi}(\boldsymbol{\beta}) > \chi_{q,1-\alpha}^{2},
\end{equation}
and thus we can define and approximated confidence region for
$\boldsymbol{\beta}$ as follows:

\[ C_{1-\alpha}^{\ast}=\{\boldsymbol{\beta}\, \, \boldsymbol{/}\,\, T_{n}^{\phi}(\boldsymbol{\beta})< \chi_{q,1-\alpha}^{2}\} .\]


There are some classical measures of divergence which cannot be expressed as a $\phi $-divergence measure, such as the divergence measures of Battacharya \cite{bha43}, R\'enyi \cite{ren61}, and Sharma and Mittal \cite{shmi77}. However, such measures are particular cases of the $(h,\phi )${\it -divergence measures} defined by
\[
d_{\phi }^{h}\left( \boldsymbol{u},\boldsymbol{p}\left( \bm \beta_0\right) \right)
:=h\left( d_{\phi }\left( \boldsymbol{u},\boldsymbol{p}\left( \bm \beta_0\right) \right) \right) ,
\]
where $h$ is a differentiable increasing function mapping from $\left[
0,\infty \right) $ onto $\left[ 0,\infty \right) $, with $h(0)=0$ and $
h^{\prime }(0)>0$. In Table \ref{t1}, these particular divergence
measures are presented, together with the corresponding expressions of $h$ and $\phi $.

{\small
\begin{table}[htbp] \tabcolsep0.8pt  \centering%
$
\begin{tabular}{|c|ccc|c|}
\hline
Divergence & \hspace*{0.5cm} & $h\left( x\right) $ & \hspace*{0.5cm} & $\phi
\left( x\right) $ \\ \hline
R\'{e}nyi &  & $\frac{1}{a\left(
a-1\right) }\log \left( a\left( a-1\right) x+1\right) ,\quad a\neq 0,1$ &
& $\frac{x^{a}-a\left( x-1\right) -1}{a\left( a-1\right) }
,\quad a\neq 0,1$ \\
Sharma-Mittal &  & $\frac{1}{b-1}
\left\{ [1+a\left( a-1\right) x]^{\frac{b-1}{a-1}}-1\right\} ,\quad b,a\neq
1 $ &  & $\frac{x^{a}-a\left( x-1\right) -1}{a\left(
a-1\right) },\quad a\neq 0,1$ \\
Battacharya &  & $-\log \left(
-x+1\right) $ &  & $-x^{1/2}+\frac{1}{2}\left(
x+1\right) $ \\ \hline
\end{tabular}
\ \ \ \ \ \ \ \ \ \ \ \ \ \ \ $
\caption{Some specific $(h,\phi)$-divergence
measures.\label{t1}}
\end{table}
}

The $(h,\phi )$-divergence measures were introduced in \cite{memopasa95} and some associated asymptotic results for them were established in \cite{mepapa97}.

%
%
%

Now, the family of empirical phi-divergence test statistics defined previously can be extended to define the empirical $(h, \phi )$-divergence test statistics based on $(h,\phi )$-divergence measures.

\begin{definition}
If {\bf C2} and {\bf C3} hold, the family of empirical $(h, \phi )$-divergence test statistics for testing (\ref{0.6}) is defined by

$$ T_{n}^{\phi, h}\left( \bm \beta_0 \right):= {2n\over \phi^{''}(1)h'(0)} h(d_{\phi }\left( \boldsymbol{u},\boldsymbol{p}\left( \bm \beta_0\right) \right) $$
with $\phi \in \Phi^{\ast}$ and $h: \left[
0,\infty \right) \rightarrow \left[ 0,\infty \right) $ a differentiable function satisfying $h(0)=0$ and $
h^{\prime }(0)>0.$
\end{definition}

The asymptotic distribution of the empirical $(h, \phi )$-divergence test statistics $T_{n}^{\phi, h}\left( \bm \beta_0 \right) $ is given in next theorem.

\begin{theorem}
Under the conditions of Theorem \ref{theo1}, the asymptotic distribution of the family of empirical $(h,\phi )$-divergence test statistics $ T_{n}^{\phi, h}\left( \bm \beta_0 \right) $ is a $\chi^2_{q}.$
\end{theorem}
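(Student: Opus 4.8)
The plan is to deduce the result directly from Theorem \ref{theo1} by a first-order Taylor expansion of $h$ at the origin. First, I would recall that the proof of Theorem \ref{theo1} actually establishes
\[
\frac{2n}{\phi''(1)}\, d_{\phi}\!\left(\boldsymbol{u},\boldsymbol{p}(\bm\beta_0)\right)\underset{n\rightarrow\infty}{\overset{\mathcal{L}}{\rightarrow}}\chi^2_q ,
\]
so that this quantity is $O_p(1)$ and therefore $d_{\phi}(\boldsymbol{u},\boldsymbol{p}(\bm\beta_0))=O_p(n^{-1})$; in particular $d_{\phi}(\boldsymbol{u},\boldsymbol{p}(\bm\beta_0))\underset{n\rightarrow\infty}{\overset{\mathcal{P}}{\longrightarrow}}0$.

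Next, since $h$ is differentiable with $h(0)=0$, a Taylor expansion around $0$ gives $h(t)=h'(0)\,t+o(t)$ as $t\to0$. Substituting $t=d_{\phi}(\boldsymbol{u},\boldsymbol{p}(\bm\beta_0))$ into the definition of $T_{n}^{\phi,h}(\bm\beta_0)$ and using $h'(0)>0$, I would obtain
\[
T_{n}^{\phi,h}(\bm\beta_0)=\frac{2n}{\phi''(1)}\, d_{\phi}\!\left(\boldsymbol{u},\boldsymbol{p}(\bm\beta_0)\right)+\frac{1}{h'(0)}\cdot\frac{2n}{\phi''(1)}\, o\!\left(d_{\phi}(\boldsymbol{u},\boldsymbol{p}(\bm\beta_0))\right).
\]
Then I would show the remainder is $o_p(1)$: writing $o(d_\phi)=d_\phi\,\varepsilon_n$ with $\varepsilon_n\to0$ whenever $d_\phi\to0$, the convergence $d_{\phi}(\boldsymbol{u},\boldsymbol{p}(\bm\beta_0))\overset{\mathcal{P}}{\longrightarrow}0$ gives $\varepsilon_n=o_p(1)$, while $\frac{2n}{\phi''(1)}d_{\phi}(\boldsymbol{u},\boldsymbol{p}(\bm\beta_0))=O_p(1)$ by the first step; their product is $o_p(1)$. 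Hence, by Slutsky's theorem, $T_{n}^{\phi,h}(\bm\beta_0)=\frac{2n}{\phi''(1)}d_{\phi}(\boldsymbol{u},\boldsymbol{p}(\bm\beta_0))+o_p(1)\underset{n\rightarrow\infty}{\overset{\mathcal{L}}{\rightarrow}}\chi^2_q$.

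The argument is short, and the only delicate point is upgrading the deterministic expansion $h(t)=h'(0)t+o(t)$ to a stochastic statement after multiplication by $n$. This is exactly where the proof of Theorem \ref{theo1} is needed twice: it supplies $d_{\phi}(\boldsymbol{u},\boldsymbol{p}(\bm\beta_0))\overset{\mathcal{P}}{\longrightarrow}0$ (so the linearization of $h$ applies) and the sharper rate $d_{\phi}(\boldsymbol{u},\boldsymbol{p}(\bm\beta_0))=O_p(n^{-1})$ (so that $n$ times the remainder remains $o_p(1)$). The hypotheses $h(0)=0$ and $h'(0)>0$ are precisely what make the normalization $2n/(\phi''(1)h'(0))$ the correct one, and no regularity assumptions beyond {\bf C1}--{\bf C9} are required.
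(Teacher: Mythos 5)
Your proposal is correct and follows essentially the same route as the paper: a first-order Taylor expansion of $h$ at $0$, using $h(0)=0$ and $h'(0)>0$, to show that $T_{n}^{\phi,h}(\bm\beta_0)$ and $T_{n}^{\phi}(\bm\beta_0)$ differ by an asymptotically negligible term, and then invoking Theorem \ref{theo1}. Your treatment of the remainder (using $d_{\phi}(\boldsymbol{u},\boldsymbol{p}(\bm\beta_0))=O_p(n^{-1})$ so that $n$ times the $o$-term is $o_p(1)$) is in fact slightly more careful than the paper's, which writes the stochastic order directly into the deterministic expansion.
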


\begin{proof}
Note that we can apply a first order Taylor expansion to $h$ obtaining

$$ h(x)=h(0) + h'(0)x + o_p(x).$$
Therefore, as $h(0)=0$,

$$ h\left( d_{\phi }\left( \boldsymbol{u},\boldsymbol{p}\left( \bm \beta_0\right) \right) \right) = h'(0)d_{\phi }\left( \boldsymbol{u},\boldsymbol{p}\left( \bm \beta_0\right) \right)  + o_p\left( d_{\phi }\left( \boldsymbol{u},\boldsymbol{p}\left( \bm \beta_0\right) \right) \right) .$$

Thus, $ 2n {h\left( d_{\phi }\left( \boldsymbol{u},\boldsymbol{p}\left( \bm \beta_0\right) \right) \right)\over h'(0) \phi^{''}(1)}= T_{n}^{\phi}\left( \bm \beta_0 \right)$ and ${2nd_{\phi }\left( \boldsymbol{u},\boldsymbol{p}\left( \bm \beta_0\right) \right) \over \phi^{''}(1)}= T_{n}^{\phi, h}\left( \bm \beta_0 \right)$ have the same asymptotic distribution, and hence the result holds.
\end{proof}

\subsection{Power function}

In this section we present an approximation to the power function of the empirical phi-divergence test statistics introduced in (\ref{0.9}) for the null hypothesis considered in (\ref{0.6}). Thus, we look for the probability
$$ \Pi_{\phi }^n(\bm \beta^*) =P_{\bm \beta^*} (T_{n}^{\phi}(\bm \beta_{0}) > \chi^2_{q; \alpha }), \quad \bm \beta^* \ne \bm \beta_0.$$

For this, we first introduce a theorem that will be important for obtaining the asymptotic approximation of the power function.

\begin{theorem}
Assume conditions {\bf C1}-{\bf C9} hold. Let us assume that the logistic regression model holds and denote by $\bm \beta^*$ the true parameter of $\bm \beta $; suppose $\bm \beta^* \ne \bm \beta_0.$ Let us also suppose

\begin{itemize}
\item $ E_{\bm \beta^*} \left[ {\boldsymbol{g}(\boldsymbol{X}, Y, \bm \beta_0) \over 1 + \bm \tau^T \boldsymbol{g}(\boldsymbol{X}, Y, \bm \beta_0)}\right] $ exists and there exists $\bm \tau $ being the only solution of
$$ E_{\bm \beta^*} \left[ {\boldsymbol{g}(\boldsymbol{X}, Y, \bm \beta_0) \over 1 + \bm \tau^T \boldsymbol{g}(\boldsymbol{X}, Y, \bm \beta_0)}\right] =\boldsymbol{0}_q,$$
\item $ \boldsymbol{n}(\bm \beta_0, \bm \beta^*) := E_{\bm \beta^*} \left[ {-\boldsymbol{g}(\boldsymbol{X}, Y, \bm \beta_0)\boldsymbol{g}^T(\boldsymbol{X}, Y, \bm \beta_0) \over (1 + \bm \tau^T \boldsymbol{g}(\boldsymbol{X}, Y, \bm \beta_0))^2} \right] $ exists and has maximal rank.
\item $ \boldsymbol{m}(\bm \beta_0, \bm \beta^*):= E_{\bm \beta^*} \left[ {\boldsymbol{g}(\boldsymbol{X}, Y, \bm \beta_0 ) \over 1 + \bm \tau^T \boldsymbol{g}(\boldsymbol{X}, Y, \beta_0 )} \psi \left( 1 + \bm \tau^T  \boldsymbol{g}(\boldsymbol{X}, Y, \beta_0 )\right) \right] $
exists, where $\psi $ is given by
\begin{equation}\label{psi}
\psi (x):= {-1\over x} \phi (x) + \phi'(x).
\end{equation}
\end{itemize}

Then,

$$ \sqrt{n} \left( d_{\phi }\left( \boldsymbol{u},\boldsymbol{p}\left( \bm \beta_0 \right) \right) - d_{\phi }\left( \boldsymbol{u},\boldsymbol{p}\left( \bm \beta^* \right) \right) \right)  \overset{\mathcal{L}}{\underset{n\rightarrow \infty }\longrightarrow} N\left( \boldsymbol{0}, \sigma^2(\bm \beta_0, \bm \beta^* )\right) ,$$
with
$$ \sigma^2(\bm \beta_0, \bm \beta^* ) := \boldsymbol{m}^T(\bm \beta_0, \bm \beta^*) \boldsymbol{n}(\bm \beta_0, \bm \beta^*)^{-1}E_{\beta^*}\left[ {\boldsymbol{g}(\boldsymbol{X}, Y, \bm \beta_0) \over 1+ \bm \tau^t \boldsymbol{g}(\boldsymbol{X}, Y, \bm \beta_0)}\right] E_{\beta^*}\left[ {\boldsymbol{g}(\boldsymbol{X}, Y, \bm \beta_0) \over 1+ \bm \tau^t \boldsymbol{g}(\boldsymbol{X}, Y, \bm \beta_0)}\right]^T  \boldsymbol{n}(\bm \beta_0, \bm \beta^*)^{-1} \boldsymbol{m}(\bm \beta_0, \bm \beta^*) .$$
\end{theorem}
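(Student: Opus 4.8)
The plan is to establish this central limit theorem by first obtaining an asymptotic linear expansion (a stochastic representation) of the empirical weights $\boldsymbol{p}(\bm \beta_0)$ under the alternative $\bm \beta^*$, then plugging this into the $\phi$-divergence and applying the delta method together with the classical CLT. The key difference with the null case treated in Theorem \ref{theo1} is that now $\tfrac1n\sum_i \boldsymbol{g}(\boldsymbol{X}_i,Y_i,\bm \beta_0)$ does not converge to $\boldsymbol{0}$, so the Lagrange multiplier $\boldsymbol{t}(\bm \beta_0)$ does not shrink to $\boldsymbol{0}$; instead, by the law of large numbers and the first hypothesis of the theorem, $\boldsymbol{t}(\bm \beta_0) \overset{\mathcal P}{\to} \bm \tau$, the population solution. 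Consequently $d_\phi(\boldsymbol{u},\boldsymbol{p}(\bm \beta_0))$ no longer degenerates to $0$ but converges to the deterministic limit $E_{\bm \beta^*}[\tfrac{1}{1+\bm\tau^T\boldsymbol{g}}\phi(1+\bm\tau^T\boldsymbol{g})]$, and the same holds with $\bm \beta^*$ in place of $\bm \beta_0$ (where the limit is $\phi(1)=0$, since the empirical equations are correctly specified at the true parameter). The theorem asserts that the $\sqrt n$-fluctuation of the difference is asymptotically normal.

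First I would make rigorous the expansion $\boldsymbol{t}(\bm \beta_0) = \bm \tau + O_p(n^{-1/2})$. Define the random map $\boldsymbol{F}_n(\bm s) := \tfrac1n\sum_{i=1}^n \tfrac{\boldsymbol{g}(\boldsymbol{X}_i,Y_i,\bm \beta_0)}{1+\bm s^T\boldsymbol{g}(\boldsymbol{X}_i,Y_i,\bm \beta_0)}$ and its population analogue $\boldsymbol{F}(\bm s) := E_{\bm \beta^*}[\tfrac{\boldsymbol{g}(\boldsymbol{X},Y,\bm \beta_0)}{1+\bm s^T\boldsymbol{g}(\boldsymbol{X},Y,\bm \beta_0)}]$. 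By hypothesis $\boldsymbol{F}(\bm\tau)=\boldsymbol{0}$ with the Jacobian $\partial\boldsymbol{F}/\partial\bm s|_{\bm\tau} = \boldsymbol{n}(\bm \beta_0,\bm \beta^*)$ of maximal rank (hence invertible); using conditions {\bf C5}, {\bf C6} to justify uniform convergence of $\boldsymbol{F}_n$ and its derivative on a neighborhood of $\bm\tau$, a standard Z-estimator argument (implicit function theorem plus the CLT applied to $\sqrt n\,\boldsymbol{F}_n(\bm\tau)$) yields
\[
\sqrt n\,(\boldsymbol{t}(\bm \beta_0)-\bm\tau) = -\boldsymbol{n}(\bm \beta_0,\bm \beta^*)^{-1}\sqrt n\,\boldsymbol{F}_n(\bm\tau) + o_p(1),
\]
which is asymptotically $N(\boldsymbol{0},\boldsymbol{n}^{-1}V\boldsymbol{n}^{-1})$ with $V := \mathrm{Var}_{\bm \beta^*}(\tfrac{\boldsymbol{g}}{1+\bm\tau^T\boldsymbol{g}})$.

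Next I would expand the divergence. Writing $a_i := 1+\boldsymbol{t}^T(\bm \beta_0)\boldsymbol{g}(\boldsymbol{X}_i,Y_i,\bm \beta_0)$, we have $n\,d_\phi(\boldsymbol{u},\boldsymbol{p}(\bm \beta_0)) = \sum_i \tfrac1{a_i}\phi(a_i)$. Set $G(\bm s) := E_{\bm \beta^*}[\tfrac{1}{1+\bm s^T\boldsymbol{g}}\phi(1+\bm s^T\boldsymbol{g})]$; a direct differentiation gives $\nabla G(\bm s) = E_{\bm \beta^*}[\tfrac{\boldsymbol{g}}{1+\bm s^T\boldsymbol{g}}\psi(1+\bm s^T\boldsymbol{g})]$ with $\psi$ as in \eqref{psi}, so $\nabla G(\bm\tau) = \boldsymbol{m}(\bm \beta_0,\bm \beta^*)$. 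Then
\[
d_\phi(\boldsymbol{u},\boldsymbol{p}(\bm \beta_0)) = \tfrac1n\sum_i\tfrac1{a_i}\phi(a_i) = G(\bm\tau) + \boldsymbol{m}^T(\bm \beta_0,\bm \beta^*)(\boldsymbol{t}(\bm \beta_0)-\bm\tau) + R_n,
\]
where the empirical-average term at the fixed point $\bm\tau$ has mean $G(\bm\tau)$ and fluctuation $O_p(n^{-1/2})$ but of a lower-order type that I will argue cancels or is dominated — more precisely, I would write $\tfrac1n\sum_i\tfrac1{b_i}\phi(b_i) = G(\bm\tau)+O_p(n^{-1/2})$ with $b_i:=1+\bm\tau^T\boldsymbol{g}(\boldsymbol{X}_i,Y_i,\bm \beta_0)$ and then control the difference between the $a_i$-sum and the $b_i$-sum by a first-order Taylor step in $\bm s$ around $\bm\tau$, which produces exactly the $\boldsymbol{m}^T(\boldsymbol{t}(\bm \beta_0)-\bm\tau)$ term plus $o_p(n^{-1/2})$; the remainder $R_n$ is $o_p(n^{-1/2})$ using {\bf C6}–{\bf C9} to bound second derivatives and $\|\boldsymbol{g}\|^3$. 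Doing the same at $\bm \beta^*$ — where the analogous population solution is $\bm\tau=\boldsymbol{0}$, $G(\boldsymbol{0})=\phi(1)=0$, and $\nabla G(\boldsymbol{0})=\boldsymbol{m}(\bm \beta^*,\bm \beta^*)$ with argument $\psi(1)=-\phi(1)+\phi'(1)=0$, so that term vanishes — one gets $d_\phi(\boldsymbol{u},\boldsymbol{p}(\bm \beta^*)) = o_p(n^{-1/2})$ (indeed it is $O_p(n^{-1})$, as in the null analysis). Therefore
\[
\sqrt n\bigl(d_\phi(\boldsymbol{u},\boldsymbol{p}(\bm \beta_0)) - d_\phi(\boldsymbol{u},\boldsymbol{p}(\bm \beta^*)) - G(\bm\tau)\bigr) = \boldsymbol{m}^T(\bm \beta_0,\bm \beta^*)\,\sqrt n(\boldsymbol{t}(\bm \beta_0)-\bm\tau) + o_p(1).
\]
Combining with the representation of $\sqrt n(\boldsymbol{t}(\bm \beta_0)-\bm\tau)$ from the previous step, and noting that by hypothesis $G(\bm\tau)$ is recentered so that the stated limit is about $d_\phi(\boldsymbol{u},\boldsymbol{p}(\bm \beta_0))-d_\phi(\boldsymbol{u},\boldsymbol{p}(\bm \beta^*))$ directly (the deterministic offset $G(\bm\tau)$ being absorbed in what the theorem implicitly treats as the recentering — I would check the precise centering against the statement), Slutsky's theorem gives asymptotic normality with variance $\boldsymbol{m}^T\boldsymbol{n}^{-1}V\boldsymbol{n}^{-1}\boldsymbol{m}$, which matches $\sigma^2(\bm \beta_0,\bm \beta^*)$ once one identifies $V = E_{\bm \beta^*}[\tfrac{\boldsymbol{g}}{1+\bm\tau^T\boldsymbol{g}}]E_{\bm \beta^*}[\tfrac{\boldsymbol{g}}{1+\bm\tau^T\boldsymbol{g}}]^T$ in the sandwiched form written there (using $E_{\bm \beta^*}[\tfrac{\boldsymbol{g}}{1+\bm\tau^T\boldsymbol{g}}]=\boldsymbol{0}$, the displayed $\sigma^2$ is the second-moment version of the variance, consistent with our $V$).

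The main obstacle I anticipate is the rigorous control of the remainder terms in the divergence expansion under the alternative: unlike the null case, where $\bm\tau=\boldsymbol{0}$ makes $a_i\to 1$ uniformly and a clean Taylor expansion of $\tfrac1{1+x}\phi(1+x)$ around $x=0$ suffices, here $a_i$ fluctuates around $b_i = 1+\bm\tau^T\boldsymbol{g}(\boldsymbol{X}_i,Y_i,\bm \beta_0)$, which need not be close to $1$, so one must Taylor-expand the map $\bm s \mapsto \tfrac1{1+\bm s^T\boldsymbol{g}_i}\phi(1+\bm s^T\boldsymbol{g}_i)$ in $\bm s$ near $\bm\tau$ and uniformly bound the second-order term over $i$ and over a neighborhood of $\bm\tau$; this is where conditions {\bf C6} (the $\|\boldsymbol{g}\|^3$ bound) and {\bf C8}–{\bf C9} do the real work, together with the fact that $\max_i\|\boldsymbol{g}(\boldsymbol{X}_i,Y_i,\bm \beta_0)\| = o_p(n^{1/2})$ from the cited moment lemma in \cite{owe90}, ensuring the denominators $1+\bm s^T\boldsymbol{g}_i$ stay bounded away from $0$ with probability tending to one. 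A secondary subtlety is verifying that the offset $G(\bm\tau) = E_{\bm \beta^*}[\tfrac{1}{1+\bm\tau^T\boldsymbol{g}}\phi(1+\bm\tau^T\boldsymbol{g})]$ equals $d_\phi(\text{uniform},\,\text{limiting }\boldsymbol{p})$ and agrees with the centering used in the statement; I would double-check this identification, since an apparent mismatch would only be a matter of how the theorem's left-hand side is normalized.
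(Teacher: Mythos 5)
Your overall route is the same as the paper's: show $\boldsymbol{t}(\bm \beta_0)\overset{\mathcal{P}}{\longrightarrow}\bm \tau$, linearize the empirical estimating equation around $\bm \tau$ to get $\sqrt n\,(\boldsymbol{t}(\bm \beta_0)-\bm \tau)$ asymptotically normal with the sandwich involving $\boldsymbol{n}(\bm \beta_0,\bm \beta^*)^{-1}$, and then push this through the divergence by a first-order expansion whose gradient is $\boldsymbol{m}(\bm \beta_0,\bm \beta^*)$; this is precisely the paper's two Taylor expansions (of $D_{\phi}$ and of $\boldsymbol{h}$) combined with the WLLN, the CLT and Slutsky. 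Your reading of the stated $\sigma^2$ via $E_{\bm \beta^*}\left[ \boldsymbol{g}/(1+\bm \tau^T\boldsymbol{g})\right]=\boldsymbol{0}_q$ is inherited from the paper's own notation and is not the problem.

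The genuine gap is in your divergence expansion. You center at the deterministic quantity $G(\bm \tau)=E_{\bm \beta^*}\left[ \phi(1+\bm \tau^T\boldsymbol{g})/(1+\bm \tau^T\boldsymbol{g})\right]$ and claim $d_{\phi}(\boldsymbol{u},\boldsymbol{p}(\bm \beta_0))=G(\bm \tau)+\boldsymbol{m}^T(\bm \beta_0,\bm \beta^*)(\boldsymbol{t}(\bm \beta_0)-\bm \tau)+o_p(n^{-1/2})$. That cannot hold: with $b_i:=1+\bm \tau^T\boldsymbol{g}(\boldsymbol{X}_i,Y_i,\bm \beta_0)$, the difference $\frac{1}{n}\sum_{i=1}^n \phi(b_i)/b_i-G(\bm \tau)$ is an i.i.d. average of mean-zero variables, hence exactly $O_p(n^{-1/2})$ — the same order as the $\boldsymbol{m}^T(\boldsymbol{t}-\bm \tau)$ term. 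It neither cancels nor is dominated, so with your centering it survives in the limit and the asymptotic variance would pick up $\mathrm{Var}_{\bm \beta^*}\left( \phi(b)/b\right)$ plus a covariance with the $\boldsymbol{t}$-term, i.e.\ it would not be the claimed $\sigma^2(\bm \beta_0,\bm \beta^*)$; your final display, which carries the explicit $-G(\bm \tau)$ recentering, is therefore false as written. The paper sidesteps this by expanding the \emph{empirical} map $D_{\phi}$ around $\bm \tau$ and keeping the random centering $D_{\phi}(\bm \tau)=\frac{1}{n}\sum_{i}\phi(b_i)/b_i$ on the left-hand side, so that the only stochastic term of order $n^{-1/2}$ is $\boldsymbol{m}^T(\boldsymbol{t}(\bm \beta_0)-\bm \tau)$; its conclusion is literally about $D_{\phi}(\boldsymbol{t}(\bm \beta_0))-D_{\phi}(\bm \tau)$, which it then identifies (admittedly loosely) with $d_{\phi}(\boldsymbol{u},\boldsymbol{p}(\bm \beta_0))-d_{\phi}(\boldsymbol{u},\boldsymbol{p}(\bm \beta^*))$. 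So your closing worry about the centering is well placed, but the resolution is not to absorb the empirical fluctuation at $\bm \tau$ into $o_p(n^{-1/2})$: you must either keep the random centering as the paper does, or track the extra term and accept a different limiting variance.
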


\begin{proof}
Let us denote by $\bm \tau $ the (only) solution of

$$ E_{\bm \beta^*} \left[ {\boldsymbol{g}(\boldsymbol{X}, Y, \bm \beta_0) \over 1 + \bm \tau^T \boldsymbol{g}(\boldsymbol{X}, Y, \bm \beta_0)}\right] =\boldsymbol{0}_q.$$

Now, let us denote by $\boldsymbol{t}(\bm \beta_0)$ the unique (by {\bf C2} and {\bf C3}) vector satisfying

\begin{equation}\label{eqaux1}
{1\over n} \sum_{i=1}^n {\boldsymbol{g}(\boldsymbol{X}_i, Y_i, \bm \beta_0) \over 1 + \boldsymbol{t}^T(\bm \beta_0) \boldsymbol{g}(\boldsymbol{X}_i, Y_i, \bm \beta_0)} = \boldsymbol{0}_q.
\end{equation}

Then, by the Weak Law of Large Numbers, it is clear that

$$ \boldsymbol{t}(\bm \beta_0)  \overset{\mathcal{P}}{\underset{n\rightarrow \infty }\longrightarrow} \bm \tau .$$

We rename $d_{\phi }\left( \boldsymbol{u},\boldsymbol{p}\left( \bm \beta_0 \right) \right) $ as a function depending on $\boldsymbol{t}(\bm \beta_0 )$ denoted $D_{\phi }(\boldsymbol{t}(\bm \beta_0 )).$ Then,

$$ D_{\phi}(\boldsymbol{t}(\bm \beta_{0}))= {1\over n} \sum_{i=1}^n {1\over 1 + \boldsymbol{t}^T(\bm \beta_0) \boldsymbol{g}(\boldsymbol{X}_i, Y_i, \bm \beta_0)} \phi \left( 1 + \boldsymbol{t}^T(\bm \beta_0) \boldsymbol{g}(\boldsymbol{X}_i, Y_i, \bm \beta_0)\right) .$$

A first order Taylor expansion of $D_{\phi}(\boldsymbol{t}(\bm \beta_{0}))$ around $\boldsymbol{t}(\bm \beta_0 )=\bm \tau $ leads to

$$ D_{\phi}(\boldsymbol{t}(\bm \beta_{0})) = D_{\phi}(\bm \tau ) + \left( {\partial D_{\phi }(\boldsymbol{t}(\bm \beta_0 ))\over \partial \boldsymbol{t}(\bm \beta_0 )} \right)_{\boldsymbol{t}(\bm \beta_0 )=\bm \tau}^T (\boldsymbol{t}(\bm \beta_{0}) - \bm \tau ) + o(\| \boldsymbol{t}(\bm \beta_{0}) - \bm \tau \|).$$

On the other hand, $\left( {\partial D_{\phi }(\boldsymbol{t}(\bm \beta_0 ))\over \partial \boldsymbol{t}(\bm \beta_0 )} \right) $ is given by

{\small
$$ {1\over n} \sum_{i=1}^n \left\{  {-\boldsymbol{g}(\boldsymbol{X}_i, Y_i, \bm \beta_0 )\over (1 + \boldsymbol{t}^T(\bm \beta_0 ) \boldsymbol{g}(\boldsymbol{X}_i, Y_i, \bm \beta_0 ))^2} \phi \left( 1 + \boldsymbol{t}^T(\bm \beta_0 ) \boldsymbol{g}(\boldsymbol{X}_i, Y_i, \bm \beta_0 )\right) + {\boldsymbol{g}(\boldsymbol{X}_i, Y_i, \bm \beta_0 )\over 1 + \boldsymbol{t}^T(\bm \beta_0 ) \boldsymbol{g}(\boldsymbol{X}_i, Y_i, \bm \beta_0 )} \phi ' \left( 1 + \boldsymbol{t}^T(\bm \beta_0) \boldsymbol{g}(\boldsymbol{X}_i, Y_i, \bm \beta_0)\right) \right\} $$
$$ = {1\over n}\sum_{i=1}^n {\boldsymbol{g}(\boldsymbol{X}_i, Y_i, \bm \beta_0 )\over 1 + \boldsymbol{t}^T(\bm \beta_0 ) \boldsymbol{g}(\boldsymbol{X}_i, Y_i, \bm \beta_0 )}\left\{  {-1\over 1 + \boldsymbol{t}^T(\bm \beta_0 ) \boldsymbol{g}(\boldsymbol{X}_i, Y_i, \bm \beta_0 )} \phi \left( 1 + \boldsymbol{t}^T(\bm \beta_0 ) \boldsymbol{g}(\boldsymbol{X}_i, Y_i, \bm \beta_0) \right)  + \phi ' \left( 1 + \boldsymbol{t}^T(\bm \beta_0 ) \boldsymbol{g}(\boldsymbol{X}_i, Y_i, \bm \beta_0) \right) \right\} $$}
$$ = {1\over n}\sum_{i=1}^n {\boldsymbol{g}(\boldsymbol{X}_i, Y_i, \bm \beta_0 )\over 1 + \boldsymbol{t}^T(\bm \beta_0 ) \boldsymbol{g}(\boldsymbol{X}_i, Y_i, \bm \beta_0 )} \psi \left( 1 + \boldsymbol{t}^T(\bm \beta_0) \boldsymbol{g}(\boldsymbol{X}_i, Y_i, \bm \beta_0)\right) ,$$
where $\psi (x)$ is defined in \eqref{psi}. Consequently, by the Weak Law of Large Numbers,

$$ \left( {\partial D_{\phi }(\boldsymbol{t}(\bm \beta_0 ))\over \partial \boldsymbol{t}(\bm \beta_0 )} \right)_{\boldsymbol{t}(\bm \beta_0 )=\bm \tau}  \overset{\mathcal{P}}{\underset{n\rightarrow \infty }\longrightarrow} E_{\bm \beta^*} \left[ {\boldsymbol{g}(\boldsymbol{X}, Y, \bm \beta_0 ) \over 1 + \bm \tau^T \boldsymbol{g}(\boldsymbol{X}, Y, \bm \beta_0 )} \psi \left( 1 + \bm \tau^T \boldsymbol{g}(\boldsymbol{X}, Y, \bm \beta_0 )\right) \right]  =: \boldsymbol{m}(\bm \beta_0 , \bm \beta^*).$$

Let us now consider the function

$$ \boldsymbol{h}(\boldsymbol{t}(\bm \beta )) := {1\over n} \sum_{i=1}^n {\boldsymbol{g}(\boldsymbol{X}_i, Y_i, \bm \beta )\over 1 + \boldsymbol{t}^T(\bm \beta ) \boldsymbol{g}(\boldsymbol{X}_i, Y_i, \bm \beta )}.$$

Remark that $h(\boldsymbol{t}(\bm \beta_0)) =\boldsymbol{0}_q$ by \eqref{eqaux1}. A first order Taylor expansion of $h(\boldsymbol{t}(\bm \beta ))$ around $\bm \tau $ leads to

$$ \boldsymbol{h}(\boldsymbol{t}(\bm \beta_{0})) = h(\bm \tau ) + \left( {\partial h(\boldsymbol{t}(\bm \beta ))\over \partial \boldsymbol{t}(\bm \beta )} \right)^T_{\boldsymbol{t}(\bm \beta )=\bm \tau} (\boldsymbol{t}(\bm \beta_{0}) - \bm \tau ) + o(\| \boldsymbol{t}(\bm \beta_{0}) - \bm \tau \| ),$$ so that

$$  (\boldsymbol{t}(\bm \beta_{0}) - \bm \tau )= -\left( \left( {\partial h(\boldsymbol{t}(\bm \beta ))\over \partial \boldsymbol{t}(\bm \beta )} \right)_{\boldsymbol{t}(\bm \beta )=\bm \tau}^{-1}\right)^{-1} h(\bm \tau ) + o(\| \boldsymbol{t}(\bm \beta_{0}) - \bm \tau \| ).$$

On the other hand,

$$\left( {\partial h(\boldsymbol{t}(\bm \beta_{0}))\over \partial \boldsymbol{t}(\bm \beta_0)} \right) = {1\over n} \sum_{i=1}^n {-\boldsymbol{g}(\boldsymbol{X}_i, Y_i, \bm \beta_0) \boldsymbol{g}^T(\boldsymbol{X}_i, Y_i, \bm \beta_0) \over (1 + \boldsymbol{t}^T(\bm \beta_0) \boldsymbol{g}(\boldsymbol{X}_i, Y_i, \bm \beta_0))^2}, $$
and hence, by the Weak Law of Large Numbers

$$ \left( {\partial h(\boldsymbol{t}(\bm \beta_{0}))\over \partial \boldsymbol{t}(\bm \beta_{0})} \right)_{\boldsymbol{t}(\bm \beta_{0})=\bm \tau}  \overset{\mathcal{P}}{\underset{n\rightarrow \infty }\longrightarrow} E_{\bm \beta^*} \left[ {-\boldsymbol{g}(\boldsymbol{X}, Y, \bm \beta_0)\boldsymbol{g}^T(\boldsymbol{X}, Y, \bm \beta_0) \over (1 + \bm \tau^T \boldsymbol{g}(\boldsymbol{X}, Y, \bm \beta_0))^2} \right]  =: \boldsymbol{n}(\bm \beta_0, \bm \beta^*).$$

As $\boldsymbol{n}^T(\bm \beta_0, \bm \beta^*)$ is a squared matrix of maximal rank, it can be inverted and consequently,

$$  (\boldsymbol{t}(\bm \beta_{0}) - \bm \tau )= \left( \boldsymbol{n}^T(\bm \beta_0, \bm \beta^*)\right)^{-1} {1\over n} \sum_{i=1}^n {\boldsymbol{g}(\boldsymbol{X}_i, Y_i, \bm \beta_0 )\over 1 + \bm \tau^T \boldsymbol{g}(\boldsymbol{X}_i, Y_i, \bm \beta_0 )} + o_p(1).$$

Therefore, applying the Central Limit Theorem,

$$ \sqrt{n} (\boldsymbol{t}(\bm \beta_{0}) - \bm \tau ) \overset{\mathcal{L}}{\underset{n\rightarrow \infty }\longrightarrow} N\left( \boldsymbol{0}, \left(\boldsymbol{n}^T(\bm \beta_0, \bm \beta^*)\right)^{-1}E_{\beta^*}\left[ {\boldsymbol{g}(\boldsymbol{X}, Y, \bm \beta_0) \over 1+ \bm \tau^T \boldsymbol{g}(\boldsymbol{X}, Y, \bm \beta_0)}\right] E_{\beta^*}\left[ {\boldsymbol{g}(\boldsymbol{X}, Y, \bm \beta_0) \over 1+ \bm \tau^T \boldsymbol{g}(\boldsymbol{X}, Y, \bm \beta_0)}\right]^T  \boldsymbol{n}(\bm \beta_0, \bm \beta^*)^{-1} \right) .$$

Finally,

$$ \sqrt{n}(D_{\phi}(\boldsymbol{t}(\bm \beta_{0})) - D_{\phi}(\bm \tau )) = \boldsymbol{m}^T(\bm \beta_0, \bm \beta^*) \sqrt{n}(\boldsymbol{t}(\bm \beta_{0}) - \bm \tau ) + o(\sqrt{n} \| \boldsymbol{t}(\bm \beta_0) - \bm \tau \| ).$$
Note that $ o(\sqrt{n} \| \boldsymbol{t}(\bm \beta_0) - \bm \tau \| )=o_P(1)$ and thus,

$$ \sqrt{n}(D_{\phi}(\boldsymbol{t}(\bm \beta_{0})) - D_{\phi}(\bm \tau )) \overset{\mathcal{L}}{\underset{n\rightarrow \infty }\longrightarrow} N\left( \boldsymbol{0}, \sigma (\bm \beta^*, \bm \beta_0 )\right) ,$$
where

$$ \sigma (\bm \beta^*, \bm \beta_0 ) := \boldsymbol{m}^T(\bm \beta_0, \bm \beta^*) \boldsymbol{n}(\bm \beta_0, \bm \beta^*)^{-1}E_{\beta^*}\left[ {\boldsymbol{g}(\boldsymbol{X}, Y, \bm \beta) \over 1+ \bm \tau^t \boldsymbol{g}(\boldsymbol{X}, Y, \bm \beta_0)}\right] E_{\beta^*}\left[ {\boldsymbol{g}(\boldsymbol{X}, Y, \bm \beta) \over 1+ \bm \tau^t \boldsymbol{g}(\boldsymbol{X}, Y, \bm \beta_0)}\right]^T  \boldsymbol{n}(\bm \beta_0, \bm \beta^*)^{-1} \boldsymbol{m}(\bm \beta_0, \bm \beta^*) ,$$
or equivalently,

$$ \sqrt{n} (d_{\phi }\left( \boldsymbol{u},\boldsymbol{p}\left( \bm \beta \right) \right) - d_{\phi }\left( \boldsymbol{u},\boldsymbol{p}\left( \bm \tau \right) \right) )  \overset{\mathcal{L}}{\underset{n\rightarrow \infty }\longrightarrow} N\left( \boldsymbol{0}, \sigma (\bm \beta^*, \bm \beta_0 ) \right) .$$
\end{proof}

Based on the previous result, let us obtain an approximation of the power function in $\bm \beta^* $. We have

\begin{eqnarray*}
\Pi_{\phi}^n(\bm \beta^*) & = & P_{\bm \beta = \bm \beta^* }\left( T_n^{\phi }(\bm \beta_0) > \chi^2_{q; \alpha }\right) \\
& = & P_{\bm \beta = \bm \beta^* }\left( {2n\over \phi ''(1)} d_{\phi }\left( \boldsymbol{u}, \boldsymbol{p}(\bm \beta_0 ) \right) > \chi^2_{q; \alpha }\right) \\
& = & P_{\bm \beta = \bm \beta^* }\left( \sqrt{n} \left( d_{\phi }\left( \boldsymbol{u}, \boldsymbol{p}(\bm \beta_0 ) \right) - d_{\phi }\left( \boldsymbol{u}, \boldsymbol{p}(\bm \tau ) \right) \right) > \sqrt{n}\left( {\chi^2_{q; \alpha }\phi''(1)\over 2n} -  d_{\phi }\left( \boldsymbol{u}, \boldsymbol{p}(\bm \tau ) \right) \right) \right) \\
& {\underset{n\rightarrow \infty }\longrightarrow} & 1- \Phi \left( \sqrt{n\over \sigma (\bm \beta^*, \bm \beta_0)}\left( {\chi^2_{q; \alpha }\phi''(1)\over 2n} -  d_{\phi }\left( \boldsymbol{u}, \boldsymbol{p}(\bm \tau ) \right) \right) \right) ,
\end{eqnarray*}
where $\Phi $ denotes the standard normal distribution function. As a consequence, if some alternative $\bm \beta^* \ne \bm \beta_0$ is the true parameter value, the probability of rejecting $\bm \beta_0$ with the rejection rule $T_n^{\phi } (\bm \beta_0)> \chi^2_{q;\alpha }$ for a fixed significance level $\alpha $ tends to 1 as $n\rightarrow \infty .$ Thus, the test is consistent in the sense of \cite{fra57}. Note that $\Pi_{\phi}^n(\bm \beta^*) $ depends on
$ d_{\phi }\left( \boldsymbol{u}, \boldsymbol{p}(\bm \tau ) \right) ,$ so in this case (tiny) differences might appear for different $\phi $.

Moreover, the previous result can be used to sort out an approximation of the minimum sample size needed for the empirical phi-divergence test statistic to have a given power $\Pi_{\phi }^n (\bm \beta^*)=: \pi^*$ and size $\alpha $. For this, notice that

$$ \pi^* \approx 1- \Phi \left( {\sqrt{n\over \sigma (\bm \beta^*, \bm \beta_0)}} \left( {\chi^2_{q; \alpha } \phi''(1)\over 2n} - d_{\phi }\left( \boldsymbol{u}, \boldsymbol{p}(\bm \tau )\right) \right) \right) \Leftrightarrow $$

\begin{eqnarray*}
\Phi^{-1}(1-\pi^*) & \approx & {\sqrt{n\over \sigma (\bm \beta^*, \bm \beta_0)}} \left( {\chi^2_{q; \alpha } \phi''(1)\over 2n} - d_{\phi }\left( \boldsymbol{u}, \boldsymbol{p}(\bm \tau )\right) \right) \\
& = & {\chi^2_{q; \alpha } \phi''(1)\over 2\sqrt{n} \sqrt{\sigma (\bm \beta^*, \bm \beta_0)}} - {\sqrt{n}\over \sqrt{\sigma (\bm \beta^*, \bm \beta_0)}} d_{\phi }\left( \boldsymbol{u}, \boldsymbol{p}(\bm \tau )\right) .
\end{eqnarray*}

Therefore,

$$ \Phi^{-2}(1-\pi^*) \approx  {\left( \chi^2_{q; \alpha } \phi''(1)\right)^2 \over 4n\sigma (\bm \beta^*, \bm \beta_0)} + {n\over \sigma (\bm \beta^*, \bm \beta_0)} d_{\phi }^2\left( \boldsymbol{u}, \boldsymbol{p}(\bm \tau )\right) - {\chi^2_{q; \alpha } \phi''(1) d_{\phi }\left( \boldsymbol{u}, \boldsymbol{p}(\bm \tau )\right) \over \sigma (\bm \beta^*, \bm \beta_0)}\Leftrightarrow   $$

$$ 4n\sigma (\bm \beta^*, \bm \beta_0) \Phi^{-2}(1-\pi^*) = \left( \chi^2_{q; \alpha } \phi''(1)\right)^2 + 4n^2 d_{\phi }^2\left( \boldsymbol{u}, \boldsymbol{p}(\bm \tau )\right) - 4n \chi^2_{q; \alpha } \phi''(1) d_{\phi }\left( \boldsymbol{u}, \boldsymbol{p}(\bm \tau )\right) \Leftrightarrow $$
$$ 4n^2 d_{\phi }^2\left( \boldsymbol{u}, \boldsymbol{p}(\bm \tau )\right) -4n \left[ \chi^2_{q; \alpha } \phi''(1) d_{\phi }\left( \boldsymbol{u}, \boldsymbol{p}(\bm \tau )\right) +  \sigma (\bm \beta^*, \bm \beta_0) \Phi^{-2}(1-\pi^*)  \right] + \left( \chi^2_{q; \alpha } \phi''(1)\right)^2 =0.$$
Thus,

\begin{equation}\label{H}
n= { l + \sqrt{l^2- 16 \left( \chi^2_{q; \alpha } \phi''(1)\right)^2 d_{\phi }^2\left( \boldsymbol{u}, \boldsymbol{p}(\bm \tau )\right) } \over 8d_{\phi }^2\left( \boldsymbol{u}, \boldsymbol{p}(\bm \tau )\right) },
\end{equation}
where $l= 4\left[ \chi^2_{q; \alpha } \phi''(1) d_{\phi }\left( \boldsymbol{u}, \boldsymbol{p}(\bm \tau )\right) +  \sigma (\bm \beta^*, \bm \beta_0) \Phi^{-2}(1-\pi^*) \right] $. Now,

{\small
\begin{eqnarray*}
l^2- 16 \left( \chi^2_{q; \alpha } \phi''(1)\right)^2 d_{\phi }^2\left( \boldsymbol{u}, \boldsymbol{p}(\bm \tau )\right) & = & 16 \left( \chi^2_{q; \alpha } \phi''(1)\right)^2 d_{\phi }^2\left( \boldsymbol{u}, \boldsymbol{p}(\bm \tau )\right) + 32 \left( \chi^2_{q; \alpha } \phi''(1)\right) d_{\phi }\left( \boldsymbol{u}, \boldsymbol{p}(\bm \tau )\right) \sigma (\bm \beta^*, \bm \beta_0) \Phi^{-2}(1-\pi^*) \\ & & + 16 \sigma^2 (\bm \beta^*, \bm \beta_0) \Phi^{-4}(1-\pi^*) -16 \left( \chi^2_{q; \alpha } \phi''(1)\right)^2 d_{\phi }^2\left( \boldsymbol{u}, \boldsymbol{p}(\bm \tau )\right) \\
& = & 16 \sigma^2 (\bm \beta^*, \bm \beta_0) \Phi^{-4}(1-\pi^*) + 32 \left( \chi^2_{q; \alpha } \phi''(1)\right) d_{\phi }\left( \boldsymbol{u}, \boldsymbol{p}(\bm \tau )\right) \sigma (\bm \beta^*, \bm \beta_0) \Phi^{-2}(1-\pi^*) \\
& = & 16 \sigma (\bm \beta^*, \bm \beta_0) \Phi^{-2}(1-\pi^*) \left[ \sigma (\bm \beta^*, \bm \beta_0) \Phi^{-2}(1-\pi^*) + 2 \chi^2_{q; \alpha } \phi''(1) d_{\phi }\left( \boldsymbol{u}, \boldsymbol{p}(\bm \tau )\right) \right] .
\end{eqnarray*}}

Hence, \eqref{H} is given by
\begin{eqnarray*}
 n & = & { 4\left[ \chi^2_{q; \alpha } \phi''(1) d_{\phi }\left( \boldsymbol{u}, \boldsymbol{p}(\bm \tau )\right) +  \sigma (\bm \beta^*, \bm \beta_0) \Phi^{-2}(1-\pi^*) \right] \over 8d_{\phi }^2\left( \boldsymbol{u}, \boldsymbol{p}(\bm \tau )\right) } \\
& + & {4 \sqrt{\sigma (\bm \beta^*, \bm \beta_0) \Phi^{-2}(1-\pi^*)} \sqrt{\sigma (\bm \beta^*, \bm \beta_0) \Phi^{-2}(1-\pi^*) +2 \chi^2_{q; \alpha } \phi''(1) d_{\phi }\left( \boldsymbol{u}, \boldsymbol{p}(\bm \tau )\right) }\over 8d_{\phi }^2\left( \boldsymbol{u}, \boldsymbol{p}(\bm \tau )\right) } .
\end{eqnarray*}

If we denote

$$ A:= 2\chi^2_{q; \alpha } \phi''(1) d_{\phi }\left( \boldsymbol{u}, \boldsymbol{p}(\bm \tau )\right) ,\, B= \sigma (\bm \beta^*, \bm \beta_0) \Phi^{-2}(1-\pi^*) ,$$ it follows

$$ n= {2A + 4B + 4 \sqrt{B(B+A)} \over  8d_{\phi }^2\left( \boldsymbol{u}, \boldsymbol{p}(\bm \tau )\right) } ={ A + 2B + 2 \sqrt{B(B+A)} \over  4d_{\phi }^2\left( \boldsymbol{u}, \boldsymbol{p}(\bm \tau )\right) } .$$

Finally, the desired sample size is

$$ n^* =[n] +1,$$ where $[x]$ denotes the integer part of $x$.

\section{Simulation study}

As explained before, empirical phi-divergence test statistics provide a family of empirical test statistics including the classical empirical likelihood ratio test. Moreover, all members of this family share the same asymptotic distribution. However, possible differences among them could arise for small and moderate sample sizes in terms of speed of convergence to the asymptotic distribution.
To compare the performance in these circumstances, we have carried out a simulation study; for this, we have considered the example proposed in \cite{roc03}. Let us explain this study: we have a logistic regression model with just one explanatory random variable $X$, that follows a standard Gaussian distribution. Then,

$$ logit\left( P(Y=1)\right) = \beta_0 + \beta_1 X. $$
Several values for $\beta_0, \beta_1$ are considered, thus obtaining four different models; this also allows to study the behavior when the overall probability of $P(Y=1)$ varies. Remark that this value is given by

$$ P(Y=1)= \int_{\mathbb{R}} {e^{\beta_0 + \beta_1 x}\over 1+e^{\beta_0 + \beta_1 x}} f_{{\cal N}(0,1)}(x) dx. $$

All these values appear in Table \ref{table1}.

\begin{table}[h]
\begin{center}
\begin{tabular}{|cccc|}
\hline Model & $P(Y=1)$ & $\beta_0$ & $\beta_1$ \\
\hline 1     & 0.5      & 0.00      & 4.36      \\
       2     & 0.4      & -1.16     & 4.20      \\
       3     & 0.3      & -2.16     & 3.71      \\
       4     & 0.2      & -2.80     & 2.82      \\
\hline
\end{tabular}
\end{center}
\caption{Model parameters.}
\label{table1}
\end{table}

We shall consider in our simulation study the family of $\phi -$divergences measures introduced in \cite{crre84}. This family of divergence measures, called the {\it power-divergence family}, is given by

\begin{equation}\label{eq9}
\phi (x)\equiv \phi_a(x)=\left\{ \begin{array}{lc} {1\over a(a+1)} (x^a- x- a(x-1)) & a\ne 0, a\ne -1 \\ x\log x -x+1 & a=0 \\ -\log x +x-1 & a=-1\end{array}\right. .
\end{equation}

Based on \eqref{0.9}, we get that the empirical power-divergence test statistics are given by

\begin{equation}\label{eq10}
T_{n}^{\phi_a}(\bm \beta_{0})={2n\over \phi^{\prime \prime }(1)} \sum_{i=1}^n \phi_a \left( 1+ \boldsymbol{t}^t (\bm \beta_{0}) \boldsymbol{g}\left(  \boldsymbol{X}_{i},Y_{i},\bm \beta_{0} \right) \right) {1\over 1+\boldsymbol{t}^t (\bm \beta_{0}) \boldsymbol{g}\left(  \boldsymbol{X}_{i},Y_{i},\bm \beta_{0} \right) }.
\end{equation}

For $a=0$, we recover the ELRT given by Eq. \eqref{0.7}. For a fixed model $M_i, i=1,2,3,4$, we proceed as follows: In order to compare the different empirical power-divergence test statistics, we have considered the following sample sizes $n=50, 100, 200, 400, 800$. Each sample consists in $n$ random vectors $(x_{ij}, Y_{ij}), i=1, 2, 3, 4, j=1, ..., n$ according to the parameters given in Table \ref{table1}. Then, we have to obtain $\bm t(\bm \beta_0).$ Note that in this case $\bm t(\bm \beta_0)$ has two components, that we will denote $t_0, t_1.$ To compute such values, we have to solve \eqref{5bis}. We have applied the Fourier-Motzkin elimination method in order to obtain bounds for $t_1$ so that $t_1 \in (t_1^-, t_1^+)$ and the same for $t_2$ (depending on $t_1$). To avoid numerical problems arising in the resolution of \eqref{5bis}, we have applied the hybrid method of Powell with analytic jacobian, starting with a fixed number $N_{pun}$ of values for $t_1$ in $(t_1^-, t_1^+)$ and then, for each of these values, a value for $t_2,$ so that a solution with a tolerance of $\epsilon $ is achieved. We have considered $N_{pun}=50$ and $\epsilon =10^{-10}.$

Next, we compute the value of the test statistic in Eq. \eqref{eq10} for the following values of $a$: -1, -0.5, -0.25, -0.125, 0, 0.5, 1, 0.67, 1, 1.5, 3. The values of the empirical power-divergence test statistics are compared to two nominal levels $1-\alpha =0.90, 0.95;$ according to the asymptotic distribution, we accept $H_0$ if the value of the empirical power-divergence test statistics do not exceed $\chi^2_{2; 1-\alpha }$. And we repeat this $N=1 000$ times for each combination $M_i$/$n$/$a$/$1-\alpha $.

We count the proportion of times for which $H_0$ is concluded in the 1000 experiences. According to the theory developed before, if $H_0$ holds, this proportion should be near the theoretical signification level $1-\alpha $ if the empirical power-divergence test statistics are near the asymptotic distribution $\chi^2_2$; this approach will be called $\chi^2$-approximation.

On the other hand, Owen \cite{owe90} has proposed to replace the $\chi^2_q$ asymptotic distribution with a $(n-1)q/(n-q)F_{q, n-q}$ distribution for small and moderate sample sizes; in the following, we will denote it as the $F$-approximation.


The results appear in Tables \ref{tab1} and \ref{tabla2}. In order to lighten the tables up, we do not write the sample sizes of 400 and 800 as the results are similar to 200. As it can be seen, the accuracy of the power $\phi $-divergence test statistics increases when the sample size grows, in accordance to the theoretical results; besides, it seems that the $F$ approximation has a higher coverage than the $\chi^2$ approximation; as both coverages are always under the significance level, it may be concluded that the $F$ approximation works better than the $\chi^2$ approach.

In the tables we have enlighten the values near the theoretical level. In this sense, a popular option for ``near" has been proposed by Dale \cite{dal86}, where the simulated signification $\hat{\alpha }$ is considered near if it satisfies $|\mathrm{logit}(1-\hat{\alpha})-\mathrm{logit}(1-\alpha)|\leq d$ and $d\in(0.35,0.70)$. We have
chosen $d=0.35$, so that we only take under consideration the power-divergence test statistics such that the corresponding exact size is
\begin{equation}
\hat{\alpha}_{N}^{a}\in(0.073,0.136)\label{eq24}%
\end{equation}
for the nominal size $\alpha=0.1$ and
\begin{equation}
\hat{\alpha}_{N}^{a}\in(0.036,0.070)\label{eq23}%
\end{equation}
for $\alpha=0.05$. Then, we only consider values in $(0.864, 0.927)$ for $1-\alpha =0.90$ and $(0.930, 0.964)$ for $1-\alpha =0.95.$ These values are written in bold in Tables \ref{tab1} and \ref{tabla2}.

Comparing the results for the different values of $a$, it can be seen that positive values of $a$ (0.5, 0.67, 1, 1.5) present better behavior than the ELRT ($a=0$); on the other hand, negative values of $a$ behave worse than the ELRT, although differences are tiny; this is in consonance with similar results arising in other different situations where power-divergence measures have been applied (see e.g. \cite{par06}). Indeed, if we see the tables as a function $f(a),$ it seems that $f$ attains a maximum in the interval $[0.67, 1.5].$ Thus, we may conclude that there are some values for $a$ different from that could compete with the ELRT.

\begin{table}[h]
\begin{center}
\begin{tabular}{|c|c|cccccccccc|}
\hline Model 1 & $a$      & -1    & -0.5  & -0.25 & -0.125 & 0     & 0.5   & 0.67  & 1     & 1.5   & 3 \\
\hline $n=50$  & $\chi^2$ & 0.768 &  0.779&  0.780&  0.780 &  0.781&  0.788&  0.793&  0.787&  0.787&  0.743\\
               & $F$      &  0.781&  0.788&  0.795&  0.795 &  0.798&  0.808&  0.807&  0.806&  0.796&  0.755\\
\hline $n=100$ & $\chi^2$ &  0.845& 0.852 & 0.852 & 0.857 &  0.862 &  0.863 & 0.863 &  {\bf 0.870}&  {\bf 0.866}&  0.847\\
               & $F$      &  0.854 & 0.862 &  {\bf 0.867}&  {\bf 0.870} &  {\bf 0.871}&  {\bf 0.874}&  {\bf 0.873}&  {\bf 0.874}&  {\bf 0.878}&  0.852 \\
\hline $n=200$ & $\chi^2$ &  0.860 &  {\bf 0.864}&  0.861 &  0.862 &  0.863 &  {\bf 0.869}&  {\bf 0.868}&  {\bf 0.871}&  {\bf 0.868}&  0.849 \\
               & $F$      &  0.861 &  {\bf 0.868}&  {\bf 0.864}&  {\bf 0.865} &  {\bf 0.866}&  {\bf 0.872}&  {\bf 0.875}&  {\bf 0.877}&  {\bf 0.873}& 0.853 \\

\hline Model 2 & $a$      & -1    & -0.5  & -0.25 & -0.125 &  0    &  0.5  &  0.67 &  1    &  1.5  &  3 \\
\hline $n=50$  & $\chi^2$ &  0.768&  0.786&  0.789&  0.790 &  0.789&  0.788&  0.786&  0.789&  0.781&  0.734 \\
               & $F$      &  0.781&  0.799&  0.805&  0.809 &  0.810&  0.808&  0.807&  0.805&  0.792&  0.756 \\
\hline $n=100$ & $\chi^2$ &  0.799&  0.818&  0.826&  0.827 &  0.834&  0.840&  0.837&  0.836&  0.834&  0.809 \\
               & $F$      &  0.810&  0.828&  0.836&  0.839 &  0.840&  0.848&  0.847&  0.846&  0.844&  0.813 \\
\hline $n=200$ & $\chi^2$ &  {\bf 0.868}&  {\bf 0.872}&  {\bf 0.875}&  {\bf 0.874} &  {\bf 0.874}&  {\bf 0.882}&  {\bf 0.884}&  {\bf 0.890}&  {\bf 0.893}&  {\bf 0.872} \\
               & $F$      &  {\bf 0.869}&  {\bf 0.879}&  {\bf 0.877}&  {\bf 0.878} &  {\bf 0.877}&  {\bf 0.887}&  {\bf 0.891}&  {\bf 0.892}&  {\bf 0.894}&  {\bf 0.875} \\

\hline Model 3 & $a$      &-1     & -0.5  & -0.25 & -0.125 &  0    &  0.5  &  0.67 &  1    &  1.5  &  3 \\
\hline $n=50$  & $\chi^2$ &  0.756&  0.760&  0.768&  0.770 &  0.770&  0.780&  0.781&  0.780&  0.770&  0.734 \\
               & $F$      &  0.768&  0.779&  0.780&  0.784 &  0.788&  0.798&  0.793&  0.797&  0.786&  0.747 \\
\hline $n=100$ & $\chi^2$ &  0.820&  0.835&  0.838&  0.843 &  0.843&  0.846&  0.844&  0.846&  0.845&  0.816\\
               & $F$      &  0.828&  0.841&  0.849&  0.854 &  0.859&  0.855&  0.854&  0.854&  0.855&  0.818 \\
\hline $n=200$ & $\chi^2$ &  {\bf 0.866}&  {\bf 0.874}&  {\bf 0.876}&  {\bf 0.877} &  {\bf 0.878}&  {\bf 0.885}&  {\bf 0.885}&  {\bf 0.888}&  {\bf 0.884}&  {\bf 0.874} \\
               & $F$      &  {\bf 0.869}&  {\bf 0.876}&  {\bf 0.879}&  {\bf 0.880} &  {\bf 0.883}&  {\bf 0.887}&  {\bf 0.888}&  {\bf 0.889}&  {\bf 0.887}&  {\bf 0.876} \\

\hline Model 4 & $a$      &-1     & -0.5  & -0.25 & -0.125 &  0    &  0.5  &  0.67 &  1    &  1.5  &  3 \\
\hline $n=50$  & $\chi^2$ & 0.746 &  0.764&  0.772&  0.775 &  0.775&  0.790&  0.788&  0.790&  0.789&  0.740 \\
               & $F$      &  0.765&  0.779&  0.785&  0.791 &  0.796&  0.806&  0.809&  0.809&  0.805&  0.763 \\
\hline $n=100$ & $\chi^2$ &  0.807&  0.825&  0.832&  0.838 &  0.840&  0.843&  0.847&  0.847&  0.848&  0.841 \\
               & $F$      &  0.816&  0.836&  0.845&  0.845 &  0.845&  0.852&  0.851&  0.858&  0.859&  0.848 \\
\hline $n=200$ & $\chi^2$ &  {\bf 0.864}&  {\bf 0.870}&  {\bf 0.871}&  {\bf 0.879} &  {\bf 0.880}&  {\bf 0.880}&  {\bf 0.881}&  {\bf 0.880}&  {\bf 0.885}&  {\bf 0.877} \\
               & $F$      &  {\bf 0.867}&  {\bf 0.874}&  {\bf 0.876}&  {\bf 0.882} &  {\bf 0.882}&  {\bf 0.887}&  {\bf 0.888}&  {\bf 0.886}&  {\bf 0.892}&  {\bf 0.880} \\
\hline
\end{tabular}
\end{center}

\caption{Values of the test statistics for different parameters $a$ for $1-\alpha =0.90$.}
\label{tab1}
\end{table}

\begin{table}[h]
\begin{center}
\begin{tabular}{|c|c|cccccccccc|}
\hline Model 1 & $a$      & -1    & -0.5  & -0.25 & -0.125 & 0     & 0.5   & 0.67  & 1     & 1.5   & 3 \\
\hline $n=50$  & $\chi^2$ &  0.813&  0.824&  0.832&  0.836 &  0.838&  0.844&  0.845&  0.848&  0.841&  0.803 \\
               & $F$      &  0.822&  0.840&  0.851&  0.855 &  0.855&  0.866&  0.865&  0.861&  0.856&  0.821 \\
\hline $n=100$ & $\chi^2$ &  0.890&  0.905&  0.913&  0.917 &  0.918&  {\bf 0.928}&  {\bf 0.929}&  {\bf 0.928}&  0.916&  0.887 \\
               & $F$      &  0.895&  0.917&  0.920&  0.921 &  0.925&  {\bf 0.934}&  {\bf 0.932}&  {\bf 0.931}&  0.925&  0.897 \\
\hline $n=200$ & $\chi^2$ &  0.911&  0.919&  0.919&  0.925 &  0.926&  0.929&  0.929&  {\bf 0.930}&  0.922&  0.906 \\
               & $F$      &  0.913&  0.920&  0.925&  0.927 &  {\bf 0.930}&  0.929&  {\bf 0.933}&  {\bf 0.933}&  0.926&  0.907 \\

\hline Model 2 & $a$      & -1    & -0.5  & -0.25 & -0.125 & 0     & 0.5   & 0.67  & 1     & 1.5   & 3 \\
\hline $n=50$  & $\chi^2$ &  0.820&  0.839&  0.845&  0.850 &  0.856&  0.863&  0.861&  0.854&  0.838&  0.791 \\
               & $F$      &  0.832&  0.852&  0.864&  0.869 &  0.868&  0.873&  0.872&  0.867&  0.858&  0.804 \\
\hline $n=100$ & $\chi^2$ &  0.861&  0.879&  0.886&  0.889 &  0.890&  0.898&  0.900&  0.901&  0.892&  0.861 \\
               & $F$      &  0.870&  0.889&  0.893&  0.896 &  0.900&  0.903&  0.905&  0.908&  0.901&  0.869 \\
\hline $n=200$ & $\chi^2$ &  0.918&  {\bf 0.932}&  {\bf 0.939}&  {\bf 0.940} &  {\bf 0.942}&  {\bf 0.943}&  {\bf 0.939}&  {\bf 0.943}&  {\bf 0.940}&  0.916 \\
               & $F$      &  0.922&  {\bf 0.934}&  {\bf 0.942}&  {\bf 0.942} &  {\bf 0.944}&  {\bf 0.946}&  {\bf 0.945}&  {\bf 0.945}&  {\bf 0.942}&  0.919 \\

\hline Model 3 & $a$      & -1    & -0.5  & -0.25 & -0.125 & 0     & 0.5   & 0.67  & 1     & 1.5   & 3 \\
\hline $n=50$  & $\chi^2$ &  0.801&  0.817&  0.823&  0.826 &  0.828&  0.833&  0.831&  0.831&  0.823&  0.782 \\
               & $F$      &  0.814&  0.833&  0.840&  0.843 &  0.844&  0.849&  0.848&  0.848&  0.845&  0.793 \\
\hline $n=100$ & $\chi^2$ &  0.879&  0.898&  0.901&  0.903 &  0.906&  0.911&  0.910&  0.911&  0.901&  0.867 \\
               & $F$      &  0.888&  0.902&  0.907&  0.911 &  0.913&  0.916&  0.918&  0.921&  0.909&  0.875 \\
\hline $n=200$ & $\chi^2$ &  0.914&  0.924&  0.927&  0.928 &  0.929&  0.929&  {\bf 0.935}&  {\bf 0.935}&  {\bf 0.936}&  0.908 \\
               & $F$      &  0.915&  0.925&  0.928&  {\bf 0.932} &  {\bf 0.933}&  {\bf 0.931}&  {\bf 0.937}&  {\bf 0.937}&  {\bf 0.937}&  0.908 \\

\hline Model 4 & $a$      & -1    & -0.5  & -0.25 & -0.125 & 0     & 0.5   & 0.67  & 1     & 1.5   & 3 \\
\hline $n=50$  & $\chi^2$ &  0.797&  0.821&  0.829&  0.832 &  0.832&  0.847&  0.848&  0.852&  0.846&  0.794 \\
               & $F$      &  0.812&  0.839&  0.846&  0.850 &  0.854&  0.862&  0.864&  0.865&  0.863&  0.808 \\
\hline $n=100$ & $\chi^2$ &  0.864&  0.882&  0.891&  0.895 &  0.896&  0.903&  0.904&  0.905&  0.902&  0.891 \\
               & $F$      &  0.870&  0.890&  0.899&  0.899 &  0.900&  0.909&  0.912&  0.912&  0.910&  0.898 \\
\hline $n=200$ & $\chi^2$ &  0.914&  0.921&  0.927&  {\bf 0.930} &  {\bf 0.934}&  {\bf 0.938}&  {\bf 0.937}&  {\bf 0.940}&  {\bf 0.944}& 0.928 \\
               & $F$      &  0.917&  0.926&  {\bf 0.931}&  {\bf 0.935} &  {\bf 0.937}&  {\bf 0.942}&  {\bf 0.943}&  {\bf 0.944}&  {\bf 0.947}&  {\bf 0.930} \\
\hline
\end{tabular}
\end{center}
\caption{Values of the test statistics for different parameters $a$ for $1-\alpha =0.95$.}
\label{tabla2}
\end{table}

\section{Conclusions}

In this paper we have introduced a new family of empirical phi-divergence test statistics for testing the simple hypothesis for the logistic regression model when empirical likelihood is considered. Empirical likelihood has revealed as a powerful tool for dealing with many different problems, as it combines the flexibility of non-parametric procedures with the results of parametric inference. The classical way to treat this problem relays on a procedure based on differences between the likelihood through the null hypothesis and the likelihood just considering that a logistic regression model holds; on the other hand, maximum likelihood is a special case of a broad class of functions measuring differences between two probability distributions, the family of phi-divergence measures. We have applied this fact to define the family of empirical phi-divergence test statistics. Next, we have shown that all members of test statistics in this family share the same asymptotic distribution under the null hypothesis, and that this distribution is the same as the classical ELRT. Therefore, if the null hypothesis holds, all these empirical phi-divergence test statistics are equivalent when the sample size is large enough and they can only differ for small and moderate sample sizes.

We have also obtained some results about the power function. In this case, we have found that the power function depends on the divergence measure considered.

To compare the different empirical power divergence test statistics in this case, we have carried out a simulation study. We have considered four different models and study the behavior of the empirical power divergence test statistics. As a conclusion, we have found that there are some empirical power divergence test statistics that can compete with the classical ELRT.

As explained in the paper, when the null hypothesis is simple, the empirical $\phi $-divergence test statistics do not rely on estimations and only a term depending on the sample and $\bm \beta_0$ is taken into account. However, this is not the case if a composite null hypothesis is considered; in this case, estimations of the parameters are needed and the standard procedure considers EMLE. On the other hand, it is possible to generalize EMLE via phi-divergence measures. Thus, the test statistic in this situation could be extended not only from the point of view of extending the test statistics, but also extending the way of estimating the parameters via a procedure based on divergence measures. This approach has been considered in other fields \cite{femipa15, bamapa15, bamapa17} and we aim to treat it in a future research for the case of logistic regression and empirical likelihood.

\section*{Acknowledgements}

This paper has been supported by the Ministry of Economy and Competitiveness of Spain under Grant PGC2018-095194-B-100 and by the Interdisciplinary Mathematical Institute of Complutense University.
%

\bibliographystyle{plain}


\end{document}